\newcommand{\N}{{\mathds{N}}}
\newcommand{\R}{{\mathds{R}}}
\newcommand{\C}{{\mathds{C}}}
\newcommand{\D}{{\mathfrak{D}}}
\newcommand{\A}{{\mathfrak{A}}}
\newcommand{\B}{{\mathfrak{B}}}
\newcommand{\Lip}{{\mathsf{L}}}
\newcommand{\dpropinquity}[1]{{\mathsf{\Lambda}^{\ast}_{#1}}}
\newcommand{\propinquity}[1]{{\mathsf{\Lambda}^\prime_{#1}}}
\newcommand{\Kantorovich}[1]{{\mathsf{mk}_{#1}}}
\newcommand{\Haus}[1]{{\mathsf{Haus}_{#1}}}
\newcommand{\StateSpace}{{\mathscr{S}}}
\newcommand{\mongekant}{{Mon\-ge-Kan\-to\-ro\-vich metric}}
\newcommand{\Lqcms}{{\JLL} quantum compact metric space}
\newcommand{\unit}{1}
\newcommand{\sa}[1]{{\mathfrak{sa}\left({#1}\right)}}
\newcommand{\JLL}{Lei\-bniz}
\newcommand{\dom}[1]{{\operatorname*{dom}\left({#1}\right)}}
\newcommand{\codom}[1]{{\operatorname*{codom}\left({#1}\right)}}
\newcommand{\diam}[2]{{\mathrm{diam}\left({#1},{#2}\right)}}
\newcommand{\norm}[2]{{\left\|{#1}\right\|_{#2}}}
\newcommand{\tunnelset}[3]{{\text{\calligra Tunnels}\,\left[\left({#1}\right)\stackrel{#3}{\rightarrow}\left({#2}\right)\right]}}
\newcommand{\journeyset}[3]{{\text{\calligra Journeys}\left[\left({#2}\right)\stackrel{#1}{\longrightarrow}\left({#3}\right)\right]}}
\newcommand{\Jordan}[2]{{{#1}\circ{#2}}} %{{{#1}\circledcirc{#2}}}
\newcommand{\Lie}[2]{{\left\{{#1},{#2}\right\}}} %{{\left[{#1},{#2}\right]_\ast}}
\newcommand{\tunneldepth}[2]{{\delta\left( {#1} \right)}}
\newcommand{\tunnelreach}[2]{{\rho\left( {#1} \right)}}
\newcommand{\tunnellength}[2]{{\lambda\left({#1} \right)}}
\newcommand{\tunnelextent}[2]{{\chi\left({#1}\right)}}
\newcommand{\journeylength}[2]{{\lambda\left(#1\right)}}
\newcommand{\co}[1]{{\overline{\mathrm{co}}\left(#1\right)}}
\newcommand{\alg}[1]{{\mathfrak{#1}}}
\newcommand{\Set}[2]{{\left\{\begin{aligned} #1 \end{aligned} \middle\vert\begin{aligned} #2 \end{aligned}\right\}}}
\theoremstyle{plain}
\newtheorem{theorem}{Theorem}[section]
\newtheorem{proposition}[theorem]{Proposition}
\newtheorem{theorem-definition}[theorem]{Theorem-Definition}
\newtheorem{proposition-definition}[theorem]{Proposition-Definition}
\theoremstyle{definition}
\newtheorem{definition}[theorem]{Definition}
\newtheorem{notation}[theorem]{Notation}
\theoremstyle{remark}
\renewcommand{\leq}{\leqslant}
\numberwithin{equation}{section}
\begin{document}

\title{The Triangle Inequality and the Dual Gro\-mov-\-Haus\-dorff Propinquity}
\author{Fr\'{e}d\'{e}ric Latr\'{e}moli\`{e}re}
\email{frederic@math.du.edu}
\urladdr{http://www.math.du.edu/\symbol{126}frederic}
\address{Department of Mathematics \\ University of Denver \\ Denver CO 80208}

\date{\today}
\keywords{Noncommutative metric geometry, Gro\-mov-\-Haus\-dorff convergence, Monge-Kantorovich distance, Quantum Metric Spaces, Lip-norms.}
\subjclass{46L89, 46L30, 58B34.}

\begin{abstract}
The dual Gro\-mov-\-Haus\-dorff propinquity is a generalization of the Gro\-mov-\-Haus\-dorff distance to the class of {\Lqcms s}, designed to be well-behaved with respect to C*-algebraic structures. In this paper, we present a variant of the dual propinquity for which the triangle inequality is established without the recourse to the notion of journeys, or finite paths of tunnels. Since the triangle inequality has been a challenge to establish within the setting of {\Lqcms s} for quite some time, and since journeys can be a complicated tool, this new form of the dual propinquity is a significant theoretical and practical improvement. On the other hand, our new metric is equivalent to the dual propinquity, and thus inherits all its properties.
\end{abstract}

\maketitle

%%%%%%%%%%%%%%%%%%%%%%%%%%%%%%%%%%%%%%%%%%%%%%%%%%%%%%%%%%%%%%%%%%%%%%%%%%%%%%%%%%%%%%%%%%%%%%%%%%%%%%%%%

\section{Introduction}

The quest for a generalization of the Gromov-Hausdorff distance \cites{Gromov81,Gromov} to the realm of Connes' noncommutative geometry \cite{Connes}, initiated with Rieffel's introduction of the quantum Gromov-Hausdorff distance \cite{Rieffel00}, soon raised the challenge of constructing an appropriate distance on the class of compact quantum metric spaces described by C*-algebras equipped with Leibniz seminorms. Several desirable features which such a distance should possess emerged from recent research in noncommutative metric geometry \cites{Rieffel09,Rieffel10,Rieffel10c,Rieffel11,Rieffel12}, and the search for such a metric proved to be a delicate matter. Yet, the apparent potential of extending the methods of metric geometry \cites{Gromov81,Gromov} to noncommutative geometry, as well as the potential applications to mathematical physics, are strong motivations to try and meet this challenge.

A surprising difficulty, among many, encountered in the construction of an appropriate generalization of the Gromov-Hausdorff distance to the class of {\Lqcms s}, was the proof of the triangle inequality for such prospective distances. Both the need and challenge raised by this matter are expressed in \cite{Rieffel10c}, following on earlier work \cites{Rieffel09,Rieffel10}, where the construction of a variation on the original quantum Gromov-Hausdorff distance \cite{Rieffel00}, called the quantum proximity, may or not satisfy the triangle inequality.

The Dual Gromov-Hausdorff Propinquity \cite{Latremoliere13c} is a distance introduced by the author as a possible answer to the challenges raised by working within the category of {\Lqcms s}. The purpose of the present article is to prove that, in fact, an equivalent variation of the dual propinquity satisfies the triangle inequality without recourse to some of the less natural aspects of the construction in \cite{Latremoliere13c}, and thus provides a satisfactory answer to a long standing challenge. The proof of the triangle inequality which we present strengthen our understanding of our dual propinquity, provides a much more effective tool for both the theoretical study of our metric and for its applications, and settle in a more satisfactory manner than our previous work did \cite{Latremoliere13} the vexing question of establishing the triangle inequality for noncommutative Gromov-Hausdorff distances adapted to C*-algebraic structures.

Inspired by the groundbreaking work of Connes \cites{Connes97,Connes} in noncommutative geometry and its metric aspects, Rieffel introduced the notion of a compact quantum metric space \cites{Rieffel98a,Rieffel99,Rieffel02,li03,Ozawa05} and the quantum Gromov-Hausdorff distance \cites{Rieffel00,Rieffel01,Rieffel10,Rieffel10c,Rieffel11}, a fascinating generalization of Gromov-Hausdorff distance to noncommutative geometry. While various algebraic structures can be considered as a foundation for a theory of quantum compact metric spaces, from order-unit spaces in the original work of Rieffel, to operator systems \cite{kerr02}, operator spaces \cite{Wu06a}, and others, recent research in noncommutative metric geometry \cites{Rieffel09,Rieffel10,Rieffel10c,Rieffel11,Rieffel12} suggests that a desirable framework consists of the class of {\Lqcms s} \cite{Latremoliere13}, or some of its subclasses such as compact C*-metric spaces \cite{Rieffel10c}. 

However, the quantum Gromov-Hausdorff distance may be zero between two {\Lqcms s} whose underlying C*-algebras are not *-isomorphic, and allows the use of non-Leibniz seminorms for its computation. As seen for instance in \cites{Rieffel09,Rieffel10,Rieffel10c}, the study of the behavior of C*-algebraic related structures such as projective modules under metric convergence is facilitated if the notion of metric convergence is aware of the C*-algebraic structure, which would in particular translate in distance zero implying isomorphism of the underlying C*-algebras, and would in fact allow to work entirely within the class of {\Lqcms s}.

Much effort was spent to find various strengthening of the quantum Gromov-Hausdorff distance with the desirable coincidence property for C*-algebras. The first successful construction of such a metric was done by Kerr \cite{kerr02}, and it was soon followed with other metrics \cites{li03,Kerr09} designed to address the same issue. Yet, the Leibniz property of seminorms played no role in the construction of these metrics, which instead relied on replacing, in the original construction of the quantum Gromov-Hausdorff distance, the state space of the compact quantum metric spaces and its natural {\mongekant} by more complex structures, from the family of matrix valued completely positive unital linear maps to the restriction of the graph of the multiplication to the noncommutative analogue of the Lipschitz seminorm unit ball. Other approaches to noncommutative metric geometry include the quantized Gromov-Hausdorff distance \cite{Wu06b} adapted to operator spaces. Thus, all these new metrics were adapted to various categories of compact quantum metric spaces and had their own fascinating features, yet the question of dealing with C*-algebras and Leibniz seminorms was not fully settled.

Motivated by his work on the relationship between vector bundles and the Gromov-Hausdorff distance \cite{Rieffel09}, Rieffel introduced another strengthening of his original quantum Gromov-Hausdorff distance, the quantum proximity \cite{Rieffel10c}, in order to encode more C*-algebraic structure within his notion of metric convergence --- a problem deeply connected to the coincidence property issue. However, there is no known proof that the quantum proximity satisfies a form of the triangle inequality and it remains unclear whether it actually does, though it seems unlikely. The root of this difficulty is that the quotient of a Leibniz seminorm is not always Leibniz \cite{Blackadar91}, thus preventing the application of the methods used earlier by Rieffel \cite{Rieffel00} to establish the triangle inequality for the quantum Gromov-Hausdorff distance.

We begun our research on the topic of strengthening the quantum Gromov-Hausdorff distance motivated in part by our own research on the theory of quantum locally compact metric spaces \cites{Latremoliere05b,Latremoliere12b}. Our first step in our own approach to this problem was to understand what seems to be a crucial construction in noncommutative metric geometry where metrics were constructed using bi-module C*-algebras \cites{Rieffel10c, Rieffel11}. We proposed a metric which was adapted to this situation, called the quantum Gromov-Hausdorff propinquity \cite{Latremoliere13}, which answered several of the challenges mentioned in the literature on the quantum Gromov-Hausdorff distance: it explains how to use the bimodule construction to get estimates on the quantum Gromov-Hausdorff distance (as our new metric dominates Rieffel's distance), and allowed one to work entirely within the category of {\Lqcms s}, which we introduced in that paper. Our quantum propinquity also enjoyed the desired coincidence property where distance zero implies *-isomorphism. In particular, the quantum propinquity satisfied the triangle inequality, and seemed a good replacement for the quantum proximity from which it takes its name. Some of the main examples of convergences for the quantum Gromov-Hausdorff distance, such as finite dimensional approximations of quantum tori \cite{Latremoliere05} and finite dimensional approximations of spheres \cite{Rieffel01}, can be strengthened to hold for the quantum propinquity \cites{Rieffel10c,Latremoliere13b}.

We soon noticed that the favorable algebraic framework provided by bimodules in the study of {\Lqcms s}, as used for the quantum propinquity, was also a source of some rigidity which made some questions, such as completeness, difficult to tackle. We thus introduced a new metric, the dual Gromov-Hausdorff propinquity \cite{Latremoliere13c}, originally to study the completeness question of the quantum propinquity. Yet the dual propinquity also met the challenge of providing a generalization of the Gromov-Hausdorff distance to {\Lqcms s}, having the desirable coincidence property, the triangle inequality, only involving {\Lqcms s} in its construction, offering the flexibility needed to work, if so wished, within proper subclasses of {\Lqcms s} such as compact C*-metric spaces \cite{Rieffel10c}, with the additional and essential property of being a complete metric. Thus, the dual propinquity defines a natural framework for the study of Gromov-Hausdorff convergence of {\Lqcms s}. As it is dominated by the quantum propinquity, it inherits all of its known examples of convergence as well. Moreover, completeness provides additional methods to prove some of these convergences, such as in the case of finite dimensional approximations of spheres \cite{Rieffel11}.

Yet, the proof of the triangle inequality for the dual propinquity relied on the same technique we employed for the quantum propinquity, and involves, essentially, the use of finite paths of {\Lqcms s}. While effective, and still the only method we know how to use for the quantum propinquity, this approach is the only rather artificial aspect of the dual propinquity and its construction. It also adds a layer of complexity in the studies of the properties of the dual propinquity. A more natural proof of the triangle inequality is a very desirable feature, in view of the brief historical review of the evolution of the field over the past decade and a half: as a major obstacle which took some time to be overcome, we believe it is very worthwhile to get the most efficient proof of this core feature of the dual propinquity. It would also solidify our claim that the dual propinquity offers a natural framework for the study of {\Lqcms s}. 

This paper proposes a natural proof of the triangle inequality for the dual propinquity by exploiting a particular flexibility of our construction in \cite{Latremoliere13c}, in exchange for a mild variation in the definition of the dual propinquity which leads to an equivalent metric. The reason why this proof has remained hidden for some time lies in the following observation: the Gromov-Hausdorff distance between two compact metric spaces $(X,\mathsf{d}_X)$ and $(Y,\mathsf{d}_Y)$ may be defined by identifying these spaces with their copies in their disjoint union $X\coprod Y$, and then take the infimum of their Hausdorff distance for all possible metrics on $X\coprod Y$ which restricts to $\mathds{d}_X$ and $\mathsf{d}_Y$ on $X$ and $Y$, respectively (such metrics on $X\coprod Y$ are called admissible). Thus, Rieffel's original construction of the quantum Gromov-Hausdorff distance between two compact quantum metric spaces $\mathds{A}$ and $\mathds{B}$ relies on identifying these two spaces with their copies in their direct sum $\mathds{A}\oplus\mathds{B}$. This feature of Rieffel's construction has been repeated in all the constructions of noncommutative Gromov-Hausdorff distances, explicitly or implicitly (for instance, it is implicit in our construction of the quantum propinquity, as becomes visible in \cite[Theorem 6.3]{Latremoliere13}).

Yet, the dual propinquity does not employ the same scheme. The Gromov-Hausdorff distance between two compact metric spaces $(X,\mathsf{d}_X)$ and $(Y,\mathsf{d}_Y)$ may also be computed as the infimum of the Hausdorff distance between $\iota_X(X)$ and $\iota_Y(Y)$ where $\iota_X : X\hookrightarrow X$ and $\iota_Y: T\hookrightarrow Z$ are two isometries into a compact metric space $(Z,\mathsf{d}_Z)$. The dual propinquity follows this scheme, employing in its construction arbitrary isometric embeddings in {\Lqcms s}, though it does measure how far from being a direct sum such embeddings are. This latter measurement is necessary for the properties of the dual propinquity to hold, such as completeness.

In the classical picture, the two constructions of the Gromov-Hausdorff distance given in the two past paragraphs are easily seen to be equivalent. In particular, one can obtain an admissible metric on $X\coprod Y$ from any pair of isometric embeddings $\iota_X$ and $\iota_Y$, using the notations in the previous few paragraphs, and to do so involve at some point restricting a metric to a subspace. This process, in the noncommutative picture, becomes taking a quotient, and as we have indicated, Leibniz seminorms are not well-behaved with respect to quotient. Thus, this must be avoided, and to this end, we defined the dual propinquity in the more general context described above.

Remarkably, the proof of the triangle inequality which we now offer takes full advantage of this very remark. As we shall see, the proof does not carry to Rieffel's proximity, for instance, precisely because of this key difference in the basic construct of the quantum proximity and the dual propinquity. This observation strongly suggests that the approach of the dual propinquity is quite appropriate when working with {\Lqcms s}, and in particular the added flexibility it affords us allows to settle the triangle inequality in a very satisfactory manner. The only slight matter, which we address in this article, is that the two basic quantities involved in the definition of the dual propinquity should be combined in a somewhat more subtle manner than was done in \cite{Latremoliere13c} to allow the proof of the triangle inequality (otherwise, a factor of $2$ intervenes in a somewhat undesirable, if not dramatic, fashion). There is little concern, however, in doing this change, as we obtain an equivalent metric.

We will avoid introducing excessive terminology when introducing the variant of the dual propinquity in this paper; we shall henceforth refer to this variant simply as the Gromov-Hausdorff propinquity, or even propinquity, and we employ very similar notations for the new propinquity and the dual propinquity, as we really see them as the same metric for all intent and purposes. This paper complement \cite{Latremoliere13c}, by adding to our understanding of the dual propinquity.

We begin our exposition with the notion of the extent of a tunnel, which is a quantity which combines the depth and reach of tunnels \cite{Latremoliere13c} in a convenient manner. We then show how this new quantity enjoy a form of triangle inequality, obtained by composing tunnels, rather than journeys, under appropriate assumptions. We then conclude by introducing our variant of the dual propinquity and establish its core properties.

\section{Extent of Tunnels}

Noncommutative metric geometry proposes to study noncommutative generalizations of algebras of Lipschitz functions on metric spaces. For our purpose, the basic class of objects is given in the following definition, first proposed in \cite{Latremoliere13}, as a C*-algebraic variation of Rieffel's original notion of compact quantum metric space \cites{Rieffel98a,Rieffel99}. We also use the following definition to set all our basic notations in this paper.

\begin{definition}\label{Lcqms-def}
A {\Lqcms} $(\A,\Lip)$ is a pair of a unital C*-algebra $\A$ with unit $\unit_\A$ and a seminorm $\Lip$ defined on a dense Jordan-Lie subalgebra $\dom{\Lip}$ of the self-adjoint part $\sa{\A}$ of $\A$, such that:
\begin{enumerate}
\item $\{a\in\dom{\Lip} : \Lip(a) = 0 \} = \R\unit_\A$,
\item the {\mongekant} $\Kantorovich{\Lip}$, defined on the state space $\StateSpace(\A)$ of $\A$ by:
\begin{equation*}
  \Kantorovich{\Lip} : \varphi,\psi \in \StateSpace(\A) \longmapsto \sup\left\{ |\varphi(a) - \psi(a)| : a\in\dom{\Lip}, \Lip(a) \leq 1 \right\}
\end{equation*}
metrizes the weak* topology on $\StateSpace(\A)$,
\item for all $a,b \in \dom{\Lip}$ we have:
\begin{equation}\label{Leibniz-Jordan-eq}
\Lip\left(\Jordan{a}{b}\right) \leq \|a\|_\A\Lip(b) + \|b\|_\A\Lip(a)
\end{equation}
and:
\begin{equation}\label{Leibniz-Lie-eq}
\Lip\left(\Lie{a}{b}\right) \leq \|a\|_\A\Lip(b) + \|b\|_\A\Lip(a)\text{,}
\end{equation}
where $\Jordan{a}{b} = \frac{ab+ba}{2}$ and $\Lie{a}{b} = \frac{ab-ba}{2i}$,
\item the seminorm $\Lip$ is lower semi-continuous with respect to the norm $\|\cdot\|_\A$ of $\A$.
\end{enumerate}
When $(\A,\Lip)$ is a {\Lqcms}, the seminorm $\Lip$ is called a Leibniz Lip-norm on $\A$.
\end{definition}

As a usual convention, given a seminorm $\Lip$ defined on some dense subspace $\dom{\Lip}$ of a space $\A$, we set $\Lip(a) = \infty$ whenever $a\not\in\dom{\Lip}$.

The {\Lqcms s} are objects of a natural category (and in fact, \cite{Latremoliere13c} introduces two categories with {\Lqcms s} as objects). For our purpose, we shall only use the following notion:
\begin{definition}
Let $(\A,\Lip_\A)$ and $(\B,\Lip_\B)$ be two {\Lqcms s}. An \emph{isometric isomorphism} $h : (\A,\Lip_\A) \rightarrow (\B,\Lip_\B)$ is a *-isomorphism from $\A$ onto $\B$ such that $\Lip_\B\circ h = \Lip_\A$.
\end{definition}

To construct a distance between {\Lqcms s}, we propose to extend the notion of an isometric embedding of two compact metric spaces into a third compact metric space to the noncommutative world. If $\iota_X : X\hookrightarrow Z$ is an isometry from a compact metric space $(X,\mathsf{d}_X)$ into a compact metric space $(Z,\mathsf{d}_Z)$, and if $f : X\rightarrow\R$ is a Lipschitz function, then McShane's Theorem \cite{McShane34} provides us with a function $g : Z\rightarrow \R$ such that $g\circ\iota_X = f$ and with the same Lipschitz constant on $(Z,\mathsf{d}_Z)$ as $f$ on $(X,\mathsf{d}_X)$. Thus the quotient of the Lipschitz seminorm $\Lip_Z$ associated with $\mathsf{d}_Z$ for $\iota_X$ is given by the Lipschitz seminorm $\Lip_X$ associated with $\mathsf{d}_X$. Conversely, if $\iota_X : X\hookrightarrow Z$ is an injection and if $\Lip_X$ is the quotient of $\Lip_Z$ for $\iota_X$ then $\iota_X$ is an isometry. This observation justifies our restriction to seminorms defined on the self-adjoint part of C*-algebras in Definition (\ref{Lcqms-def}) since extensions of $\C$-valued Lipschitz functions from subspace to the full space are not guaranteed to have the same Lipschitz seminorm \cite{Rieffel06}.

The duality between the notion of isometric embedding and quotient Lip-norms is the basis of Rieffel's construction of the quantum Gromov-Hausdorff distance \cite{Rieffel00}, though it is used only in a restricted form. In \cite{Latremoliere13c}, we proposed the following formulation of more general isometric embeddings for our purpose:

\begin{definition}\label{tunnel-def}
Let $(\A_1,\Lip_1)$ and $(\A_2,\Lip_2)$ be two {\Lqcms s}. A \emph{tunnel} $(\D,\Lip_\D,\pi_1,\pi_2)$ is a {\Lqcms} $(\D,\Lip_\D)$ and two *-epimorphisms $\pi_1 : \D\twoheadrightarrow\A_1$ and $\pi_2:\D\twoheadrightarrow\A_2$ such that, for all $j\in\{1,2\}$, and for all $a\in \sa{\A_j}$, we have:
\begin{equation*}
\Lip_j(a) = \inf\left\{ \Lip_\D(d) : d\in\sa{\D}, \pi_j(d) = a \right\}\text{.}
\end{equation*}
\end{definition}

A tunnel provides a mean to measure how far apart two {\Lqcms s} might be. To this end, we introduced several quantities in \cite{Latremoliere13c} associated with tunnels. As a matter of notation, the Hausdorff distance on closed subsets of a compact metric space $(X,\mathsf{d}_X)$ is denoted by $\Haus{\mathsf{d}_X}$.

\begin{definition}\label{tunnel-length-def}
Let $(\A,\Lip_\A)$ and $(\B,\Lip_\B)$ be two {\Lqcms s}, and let $\tau = (\D,\Lip_\D,\pi_\A,\pi_\B)$ be a tunnel from $(\A,\Lip_\A)$ to $(\B,\Lip_\B)$. 
\begin{itemize}
\item The \emph{reach} $\tunnelreach{\tau}{}$ of $\tau$ is:
\begin{equation*}
\Haus{\Kantorovich{\Lip_\D}}(\pi_\A^\ast(\StateSpace(\A)),\pi_\B^\ast(\StateSpace(\B)))\text{,}
\end{equation*}
where for any positive linear map $f : \alg{E}\rightarrow\alg{F}$ between two C*-algebras $\alg{E}$ and $\alg{F}$, we denote the transpose map by $f^\ast : \varphi \in\StateSpace(\alg{F}) \mapsto \varphi\circ f \in \StateSpace(\alg{E})$.
\item The \emph{depth} $\tunneldepth{\tau}{}$ of $\tau$ is:
\begin{equation*}
\Haus{\Kantorovich{\Lip_\D}}(\StateSpace(\D),\co{\StateSpace(\A)\cup\StateSpace(\B)})\text{,}
\end{equation*}
where $\co{E}$ is the closure of the convex envelope of a set $E \subseteq \StateSpace(\D)$.
\item The \emph{length} $\tunnellength{\tau}{}$ of $\tau$ is:
\begin{equation*}
\max\left\{ \tunneldepth{\tau}{}, \tunnelreach{\tau}{} \right\}\text{.}
\end{equation*}
\end{itemize}
\end{definition}

A feature of the dual propinquity is that it allows to put restrictions on which tunnels to use to compute the distance between two {\Lqcms s}. For instance, using the notations of Definition (\ref{tunnel-def}), one could only consider tunnels of the form $(\A\oplus\B,\Lip,\pi_\A,\pi_\B)$ where $\pi_\A$ and $\pi_\B$ are the canonical surjections and $\Lip$ is admissible for $\Lip_\A$ and $\Lip_\B$ in the sense of \cite{Rieffel00}. A more interesting restriction could be to ask for tunnels $(\D,\Lip,\pi,\rho)$ with $(\D,\Lip)$ a compact C*-metric space \cite{Rieffel10c}. Many other variations may occur depending on the desired application, as long as we choose a class of tunnels with the following properties needed to ensure that the dual propinquity is indeed a metric, as described in the next three definitions:

\begin{definition}[\cite{Latremoliere13c}*{Definition 3.10}]
Let $(\A,\Lip_\A)$ and $(\B,\Lip_\B)$ be two {\Lqcms s} and let $\tau = (\D,\Lip_\D,\pi_\A,\pi_\B)$ be a tunnel from $(\A,\Lip_\A)$ to $(\B,\Lip_\B)$. The \emph{reversed tunnel} $\tau^{-1}$ of $\tau$ is the tunnel $(\D,\Lip_\D,\pi_\B,\pi_\A)$ from $(\B,\Lip_\B)$ to $(\A,\Lip_\A)$.
\end{definition}

\begin{definition}[\cite{Latremoliere13c}*{Definition 3.18}]
Let $\mathcal{T}$ be a class of tunnels. Let $(\A,\Lip_\A)$ and $(\B,\Lip_\B)$ be two {\Lqcms s}. A \emph{journey} along $\mathcal{T}$ from $(\A,\Lip_\A)$ to $(\B,\Lip_\B)$ is a finite family:
\begin{equation*}
\left( \A_j,\Lip_j,\tau_j,\A_{j+1},\Lip_{j+1} : j=1,\ldots,n \right)
\end{equation*}
where $n\in\N$, and:
\begin{enumerate}
\item $(\A_1,\Lip_1) = (\A,\Lip_\A)$ and $(\A_{n+1},\Lip_{n+1}) = (\B,\Lip_\B)$,
\item for all $j\in\{1,\ldots,n+1\}$, the pair $(\A_j,\Lip_j)$ is a {\Lqcms},
\item for all $j\in\{1,\ldots,n\}$, the tunnel $\tau_j \in \mathcal{T}$ goes from $(\A_j,\Lip_j)$ to $(\A_{j+1},\Lip_{j+1})$.
\end{enumerate}
\end{definition}

\begin{notation}
Let $\mathcal{T}$ be a class of tunnels. Let $(\A,\Lip_\A)$ and $(\B,\Lip_\B)$ be two {\Lqcms s}. The set of all journeys from $(\A,\Lip_\A)$ to $(\B,\Lip_\B)$ along $\mathcal{T}$ is denoted by:
\begin{equation*}
\journeyset{\mathcal{T}}{\A,\Lip_\A}{\B,\Lip_\B} \text{.}
\end{equation*}
\end{notation}

\begin{definition}[\cite{Latremoliere13c}*{Definition 3.11}]
Let $\mathcal{C}$ be a nonempty class of {\Lqcms s}. A class $\mathcal{T}$ of tunnels is \emph{compatible} with $\mathcal{C}$ when:
\begin{enumerate}
\item if $\tau \in \mathcal{T}$ then $\tau^{-1}\in\mathcal{T}$,
\item if $(\A,\Lip_\A)$ and $(\B,\Lip_\B)$ are in $\mathcal{C}$ then $\journeyset{\mathcal{T}}{\A,\Lip_\A}{\B,\Lip_\B}$ is not empty,
\item if $(\A,\Lip_\A)$ and $(\B,\Lip_\B)$ are in $\mathcal{C}$ and $h : (\A,\Lip_\A)\rightarrow (\B,\Lip_\B)$ is an isometric isomorphism, then $(\A,\Lip_\A,\mathrm{id}_\A,h)$ and $(\B,\Lip_\B,h^{-1},\mathrm{id}_\B)$ are in $\mathcal{T}$, where $\mathrm{id}_\A$ and $\mathrm{id}_\B$ are the identity automorphisms of $\A$ and $\B$ respectively,
\item any tunnel in $\mathcal{T}$ is from and to an element of $\mathcal{C}$.
\end{enumerate}
\end{definition}

\begin{notation}
If $\mathcal{C}$ is a nonempty class of {\Lqcms s} and $\mathcal{T}$ is a class of tunnels compatible with $\mathcal{C}$, and if $(\A,\Lip_\A)$ and $(\B,\Lip_\B)$ are in $\mathcal{C}$, then the (nonempty) set of all tunnels in $\mathcal{T}$ from $(\A,\Lip_\A)$ to $(\B,\Lip_\B)$ is denoted by:
\begin{equation*}
\tunnelset{\A,\Lip_\A}{\B,\Lip_\B}{\mathcal{T}} \text{.}
\end{equation*}
\end{notation}

While a natural approach, taking the infimum of the lengths of all tunnels between two given {\Lqcms s} for some class of compatible tunnels does not a priori give us a metric. The largest pseudo-metric dominated by the lengths of tunnels is constructed in \cite{Latremoliere13c} and is called the dual Gro\-mov-\-Haus\-dorff propinquity:

\begin{definition}[\cite{Latremoliere13c}*{Definition 3.21}]
Let $\mathcal{C}$ be a nonempty class of {\Lqcms s} and let $\mathcal{T}$ be a class of tunnels compatible with $\mathcal{C}$. The \emph{dual Gromov-Hausdorff $\mathcal{T}$-propinquity} between two {\Lqcms s} $(\A,\Lip_\A)$ and $(\B,\Lip_\B)$ in $\mathcal{C}$ is:
\begin{equation*}
\dpropinquity{\mathcal{T}}((\A,\Lip_\A),(\B,\Lip_\B)) = \inf\left\{ \journeylength{\Upsilon}{} : \Upsilon \in \journeyset{\mathcal{T}}{\A,\Lip_\A}{\B,\Lip_\B} \right\}
\end{equation*}
where the length $\journeylength{\Upsilon}{}$ of a journey:
\begin{equation*}
\Upsilon = \left( \A_j,\Lip_j,\tau_j,\A_{j+1},\Lip_{j+1} : j=1,\ldots,n \right)
\end{equation*}
is $\sum_{j=1}^n\tunnellength{\tau_j}{}$.
\end{definition}

Remarkably, the dual Gromov-Hausdorff propinquity is a metric \cite[Theorem 4.17]{Latremoliere13c} up to isometric isomorphism, and it is complete \cite[Theorem 6.27]{Latremoliere13c} when applied to the class of all tunnels between arbitrary {\Lqcms s}.

We propose to modify the dual propinquity to avoid the recourse to journeys in its definition. To this end, we propose a new quantity associated with tunnels:

\begin{definition}\label{tunnel-extent-def}
Let $(\A,\Lip_\A)$ and $(\B,\Lip_\B)$ be two {\Lqcms s} and let $\tau = (\D,\Lip_\D,\pi_\A,\pi_\B)$ be a tunnel from $(\A,\Lip_\A)$ to $(\B,\Lip_\B)$. The \emph{extent} $\tunnelextent{\tau}{}$ of $\tau$ is the nonnegative real number:
\begin{equation*}
\max\left\{ \Haus{\Kantorovich{\Lip_\D}}(\StateSpace(\D),\StateSpace(\A)), \Haus{\Kantorovich{\Lip_\D}}(\StateSpace(\D),\StateSpace(\B)) \right\}\text{.}
\end{equation*}
\end{definition}

The key observation for this section is that the extent is equivalent to the length:

\begin{proposition}\label{length-vs-extent-prop}
Let $(\A,\Lip_\A)$ and $(\B,\Lip_\B)$ be two {\Lqcms s} and let $(\D,\Lip_\D,\pi_\A,\pi_\B)$ be a tunnel from $(\A,\Lip_\A)$ to $(\B,\Lip_\B)$. Then:
\begin{equation*}
\tunnellength{\tau}{} = \max\{ \tunnelreach{\tau}{\cdot}, \tunneldepth{\tau}{} \} \leq \tunnelextent{\tau}{\cdot} \leq 2 \tunnellength{\tau}{} \text{.}
\end{equation*}
\end{proposition}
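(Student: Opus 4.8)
The plan is to translate everything into statements about three subsets of the state space $\StateSpace(\D)$ equipped with the metric $\Kantorovich{\Lip_\D}$, and then to exploit two structural facts: that the embedded state spaces are convex, and that they all sit inside $\StateSpace(\D)$. Write $S_\A = \pi_\A^\ast(\StateSpace(\A))$ and $S_\B = \pi_\B^\ast(\StateSpace(\B))$ for the copies of the state spaces occurring in Definitions \ref{tunnel-length-def} and \ref{tunnel-extent-def}. Since $\pi_\A^\ast,\pi_\B^\ast$ are affine and $\StateSpace(\A),\StateSpace(\B)$ are convex, both $S_\A$ and $S_\B$ are convex subsets of the weak* compact convex set $\StateSpace(\D)$, and hence $K := \co{S_\A \cup S_\B} \subseteq \StateSpace(\D)$. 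The first thing I would record is that, because of the inclusions $S_\A, S_\B, K \subseteq \StateSpace(\D)$, the Hausdorff terms of interest collapse to one-sided suprema: for instance $\Haus{\Kantorovich{\Lip_\D}}(\StateSpace(\D), S_\A) = \sup_{\sigma \in \StateSpace(\D)} \dist(\sigma, S_\A)$, since the other half of the Hausdorff maximum vanishes, and likewise for $S_\B$ and for $K$ (the latter computing the depth).

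For the left inequality $\tunnellength{\tau}{} \leq \tunnelextent{\tau}{}$, I would bound the reach and the depth separately by the extent. For the depth this is immediate: since $S_\A \subseteq K$ we have $\dist(\sigma, K) \leq \dist(\sigma, S_\A)$ for every $\sigma$, whence $\tunneldepth{\tau}{} \leq \Haus{\Kantorovich{\Lip_\D}}(\StateSpace(\D), S_\A) \leq \tunnelextent{\tau}{}$. For the reach, I would observe that each $a \in S_\A$ satisfies $\dist(a, S_\B) \leq \sup_{\sigma \in \StateSpace(\D)}\dist(\sigma, S_\B) = \Haus{\Kantorovich{\Lip_\D}}(\StateSpace(\D), S_\B) \leq \tunnelextent{\tau}{}$, and symmetrically for $b \in S_\B$; taking suprema gives $\tunnelreach{\tau}{} \leq \tunnelextent{\tau}{}$. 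The maximum of the two yields the claim.

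The substance is the right inequality $\tunnelextent{\tau}{} \leq 2\tunnellength{\tau}{}$, and by the symmetric roles of $\A$ and $\B$ it suffices to bound $\Haus{\Kantorovich{\Lip_\D}}(\StateSpace(\D), S_\A)$. The key lemma I would isolate is that every $\mu \in K$ lies within $\tunnelreach{\tau}{}$ of $S_\A$. For a finite convex combination $\mu = \sum_i t_i \xi_i$ with $\xi_i \in S_\A \cup S_\B$, I would choose for each $i$ a point $\alpha_i \in S_\A$ with $\Kantorovich{\Lip_\D}(\xi_i, \alpha_i) \leq \tunnelreach{\tau}{}$ (taking $\alpha_i = \xi_i$ when $\xi_i \in S_\A$, and using the reach bound otherwise), set $\nu = \sum_i t_i \alpha_i$, which lies in $S_\A$ by convexity, and estimate, for any $d \in \sa{\D}$ with $\Lip_\D(d) \leq 1$, that $|\mu(d) - \nu(d)| \leq \sum_i t_i |\xi_i(d) - \alpha_i(d)| \leq \tunnelreach{\tau}{}$ by linearity of the states; thus $\Kantorovich{\Lip_\D}(\mu, \nu) \leq \tunnelreach{\tau}{}$, and since $\dist(\cdot, S_\A)$ is continuous this passes to the closure $K$. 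With the lemma in hand, for any $\sigma \in \StateSpace(\D)$ I would pick $\mu \in K$ with $\Kantorovich{\Lip_\D}(\sigma, \mu) = \dist(\sigma, K) \leq \tunneldepth{\tau}{}$ and conclude $\dist(\sigma, S_\A) \leq \Kantorovich{\Lip_\D}(\sigma, \mu) + \dist(\mu, S_\A) \leq \tunneldepth{\tau}{} + \tunnelreach{\tau}{} \leq 2\tunnellength{\tau}{}$; taking the supremum over $\sigma$ and then the maximum over $\A,\B$ finishes the proof.

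The only delicate point, which I would treat with care, is the convex-combination lemma: it uses simultaneously the convexity of $S_\A$ (so that $\nu$ is again an embedded state) and the fact that, for each fixed Lipschitz element $d$, the Monge-Kantorovich metric is controlled linearly in the state, so that distances do not deteriorate under convex averaging. Everything else reduces to the triangle inequality for the Hausdorff distance together with the inclusions $S_\A, S_\B, K \subseteq \StateSpace(\D)$. I do not expect the passage to the closure to cause difficulty, since $\dist(\cdot, S_\A)$ is $1$-Lipschitz for $\Kantorovich{\Lip_\D}$ and the finite convex combinations are dense in $K$.
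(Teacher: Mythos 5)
Your proposal is correct and follows essentially the same route as the paper: the first inequality is obtained by bounding reach and depth separately by the extent using the inclusions $S_\A, S_\B \subseteq \co{S_\A\cup S_\B} \subseteq \StateSpace(\D)$, and the second by going through a convex combination supplied by the depth and replacing its $\B$-components with nearby points of $S_\A$ supplied by the reach, using convexity of $S_\A$ and linearity of states to get $\tunnelextent{\tau}{} \leq \tunneldepth{\tau}{} + \tunnelreach{\tau}{} \leq 2\tunnellength{\tau}{}$. The only cosmetic difference is that the paper uses the two-term representation $t\mu\circ\pi_\A + (1-t)\nu\circ\pi_\B$ of points in the convex hull of a union of two convex sets, while you handle general finite convex combinations and pass to the closure via density and the $1$-Lipschitz property of $\dist(\cdot,S_\A)$, which is the same estimate with slightly more explicit bookkeeping.
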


\begin{proof}
If $\varphi \in \StateSpace(\A)$ then $\pi_\A^\ast(\varphi) \in\StateSpace(\D)$ and thus there exists $\psi \in \StateSpace(\B)$ such that:
\begin{equation*}
\Kantorovich{\Lip_\D}(\varphi\circ\pi_\A,\psi\circ\pi_\B) \leq \tunnelextent{\tau}{\cdot}\text{.}
\end{equation*}
As the argument is symmetric in $\A$ and $\B$, we deduce:
\begin{equation*}
\tunnelreach{\tau}{} \leq \tunnelextent{\tau}{\cdot}\text{.}
\end{equation*}

Now, by definition, since $\StateSpace(\A),\StateSpace(\B) \subseteq \co{\StateSpace(\A)\cup\StateSpace(\B)}$, we certainly have $\tunneldepth{\tau}{}\leq\tunnelextent{\tau}{}$. Hence:
\begin{equation*}
\max\{ \tunnelreach{\tau}{}, \tunneldepth{\tau}{} \}  \leq \tunnelextent{\tau}{\cdot}\text{.}
\end{equation*}

This proves the first inequality of our proposition. We now prove the second inequality.

Let $\varphi\in\StateSpace(\D)$. By Definition of the depth of $\tau$, There exists $\mu\in\StateSpace(\A)$ and $\nu\in\StateSpace(\B)$ such that, for some $t\in [0,1]$:
\begin{equation*}
\Kantorovich{\Lip_\D}(\varphi,t\mu\circ\pi_\A+(1-t)\nu\circ\pi_\B) \leq \tunneldepth{\tau}{}\text{.}
\end{equation*}

Now, by the definition of the reach of $\tau$, there exists $\theta\in \StateSpace(\A)$ such that:
\begin{equation*}
\Kantorovich{\Lip_\D}(\nu\circ\pi_\B,\theta\circ\pi_\A)\leq\tunnelreach{\tau}{}\text{.}
\end{equation*}
Let $\eta = t\mu\circ\pi_\A+(1-t)\nu\circ\pi_\B$ and $\omega = t\mu\circ\pi_\A+(1-t)\theta\circ\pi_\A$. Then:
\begin{equation*}
\begin{split}
\Kantorovich{\Lip_\D}(\varphi,\omega) &\leq \Kantorovich{\Lip_\D}(\varphi,\eta)  + \Kantorovich{\Lip_\D}(\eta,\omega)\\
&\leq \tunneldepth{\tau}{} + \sup\Set{|\eta(d) - \omega(d)| } {d\in\sa{\D},\Lip_\D(d) \leq 1 }\\
&= \tunneldepth{\tau}{} + \sup\Set{(1-t)|\nu(\pi_\B(d)) - \theta(\pi_\A(d))|}{d\in\sa{\D}, \Lip_\D(d) \leq 1}\\
&= \tunneldepth{\tau}{} + (1-t)\Kantorovich{\Lip_\D}(\nu\circ\pi_\B,\theta\circ\pi_\A)\\
&\leq \tunneldepth{\tau}{} + \tunnelreach{\tau}{}\text{.}
\end{split}
\end{equation*}
Again by symmetry in $\A$ and $\B$, we get:
\begin{equation*}
\tunnelextent{\tau}{\cdot} \leq \tunnelreach{\tau}{} + \tunneldepth{\tau}{} \leq 2\tunnellength{\tau}{} \text{.}
\end{equation*}
This concludes our proof.
\end{proof}

In \cite{Latremoliere13c}, tunnels were not composed; instead journeys were introduced because of their natural composition properties, from which the triangle inequality of the dual propinquity was proven. We now show that we can actually compose tunnels in such a manner that the extent behaves properly with respect to this composition. This leads us to our variant of the propinquity in the next section.

\section{Triangle Inequality for an equivalent metric to the Dual Propinquity}

The main result of this section is the following method to compose tunnels, in such a manner as to obtain a desirable behavior of the extent:

\begin{theorem}\label{tunnel-composition-thm}
Let $(\A,\Lip_\A)$, $(\B,\Lip_\B)$ and $(\alg{E},\Lip_{\alg{E}})$ be three {\Lqcms s}. Let $\tau_1=(\D_1,\Lip_1,\pi_1,\pi_2)$ be a tunnel from $(\A,\Lip_\A)$ to $(\B,\Lip_\B)$ and $\tau_2=(\D_2,\Lip_2,\rho_1,\rho_2)$ be a tunnel from $(\B,\Lip_\B)$ to $(\alg{E},\Lip_{\alg{E}})$. Let $\varepsilon > 0$.

If, for all $(d_1,d_2) \in \sa{\D_1\oplus\D_2}$, we set:
\begin{equation*}
\Lip(d_1,d_2) = \max\left\{ \Lip_1(d_1), \Lip_2(d_2), \frac{1}{\varepsilon}\norm{\pi_2(d_1) - \rho_1(d_2)}{\B} \right\}\text{,}
\end{equation*}
and if $\eta_1 : (d_1,d_2) \mapsto d_1$ and $\eta_2 : (d_1,d_2) \mapsto d_2$, then $(\D_1\oplus\D_2,\Lip)$ is a {\Lqcms} and $\tau_3 = (\D_1\oplus\D_2,\Lip,\pi_1\circ\eta_1,\rho_2\circ\eta_2)$ is a tunnel from $(\A,\Lip_\A)$ to $(\alg{E},\Lip_{\alg{E}})$, whose extent satisfies:
\begin{equation*}
\tunnelextent{\tau_3}{} \leq \tunnelextent{\tau_1}{} + \tunnelextent{\tau_2}{} + \varepsilon \text{.}
\end{equation*}
Moreover, the affine maps $\varphi \in \StateSpace(\D_1)\mapsto \varphi\circ\eta_1$ and $\varphi\in\StateSpace(\D_2)\mapsto\varphi\circ\eta_2$ are isometries from, respectively, $(\StateSpace(\D_1),\Kantorovich{\Lip_1})$ and $(\StateSpace(\D_2),\Kantorovich{\Lip_2})$ into $(\StateSpace(\D_1\oplus\D_2),\Kantorovich{\Lip})$. 
\end{theorem}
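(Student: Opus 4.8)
The plan is to verify in turn that $(\D_1\oplus\D_2,\Lip)$ is a {\Lqcms}, that $\tau_3$ is a tunnel, and that the coordinate maps induce isometries; the headline extent inequality then falls out of a short chain of triangle inequalities. First I would check that $\Lip$ is a Leibniz Lip-norm. Its domain is $\dom{\Lip_1}\times\dom{\Lip_2}$, which is dense and a Jordan--Lie subalgebra of $\sa{\D_1\oplus\D_2}$, and $\Lip$ is lower semicontinuous as a finite maximum of lower semicontinuous functions (the third term being continuous). If $\Lip(d_1,d_2)=0$ then $d_1=s\unit_{\D_1}$ and $d_2=t\unit_{\D_2}$ for scalars $s,t$, while $\norm{\pi_2(d_1)-\rho_1(d_2)}{\B}=|s-t|=0$ forces $s=t$, so the null space is exactly $\R\unit_{\D_1\oplus\D_2}$. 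For the Leibniz inequalities, the first two entries of the maximum obey them because $\Lip_1,\Lip_2$ do and $\norm{d_j}{\D_j}\le\norm{(d_1,d_2)}{\D_1\oplus\D_2}$; for the third entry I would use that $\pi_2,\rho_1$ are $\ast$-homomorphisms, so that with $\alpha_i=\pi_2(\cdot)$, $\beta_i=\rho_1(\cdot)$ the telescoping $\Jordan{\alpha_1}{\alpha_2}-\Jordan{\beta_1}{\beta_2}=\Jordan{(\alpha_1-\beta_1)}{\alpha_2}+\Jordan{\beta_1}{(\alpha_2-\beta_2)}$, together with $\norm{\Jordan{x}{y}}{\B}\le\norm{x}{\B}\norm{y}{\B}$ and $\norm{\pi_2(d_1)-\rho_1(d_2)}{\B}\le\varepsilon\Lip(d_1,d_2)$, yields exactly the Leibniz bound (and identically for the Lie product).

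The delicate condition, which I expect to be the main obstacle, is that $\Kantorovich{\Lip}$ metrizes the weak-$\ast$ topology on $\StateSpace(\D_1\oplus\D_2)$. I would invoke Rieffel's characterization of Lip-norms and show that $\{(d_1,d_2):\Lip(d_1,d_2)\le 1\}$ has totally bounded image in $\sa{\D_1\oplus\D_2}/\R\unit$. Fix a state $\omega\in\StateSpace(\B)$ and set $\mu_1=\omega\circ\pi_2$, $\mu_2=\omega\circ\rho_1$. Given $(d_1,d_2)$ in the ball, write $s_j=\mu_j(d_j)$; since $\Lip_j(d_j)\le 1$, each centered element $d_j-s_j\unit$ lies in a totally bounded set $\tilde K_j$ (this is Rieffel's criterion applied to $\Lip_j$), while $|s_1-s_2|=|\omega(\pi_2(d_1)-\rho_1(d_2))|\le\varepsilon$. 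Subtracting the single scalar $s_1\unit_{\D_1\oplus\D_2}$ then places $(d_1,d_2)$ in $\tilde K_1\times(\tilde K_2+[-\varepsilon,\varepsilon]\unit)$, a product of totally bounded sets, so the image in the quotient is totally bounded. The key point is that the coupling term ties the two summand-scalars to within $\varepsilon$, which is exactly what lets one pass from the two separate quotients by $\R\unit_{\D_j}$ to the single quotient by the diagonal $\R\unit$.

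Next I would establish that $\tau_3$ is a tunnel. The legs $\pi_1\circ\eta_1$ and $\rho_2\circ\eta_2$ are visibly $\ast$-epimorphisms, so only the quotient identity needs proof. For $a\in\sa{\A}$, any lift $(d_1,d_2)$ with $\pi_1(d_1)=a$ has $\Lip(d_1,d_2)\ge\Lip_1(d_1)\ge\Lip_\A(a)$, giving ``$\ge$''. For ``$\le$'', fix $\delta>0$, lift $a$ through $\tau_1$ to $d_1$ with $\Lip_1(d_1)\le\Lip_\A(a)+\delta$, set $b=\pi_2(d_1)$ (so $\Lip_\B(b)\le\Lip_1(d_1)$), and lift $b$ through the $\rho_1$-leg of $\tau_2$ to $d_2$ with $\rho_1(d_2)=b$ and $\Lip_2(d_2)\le\Lip_\B(b)+\delta$. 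Then $\pi_2(d_1)-\rho_1(d_2)=0$, so $\Lip(d_1,d_2)\le\Lip_\A(a)+2\delta$; letting $\delta\to0$ gives equality, and the symmetric construction handles $\rho_2\circ\eta_2$. The same lift-with-matching-image device proves the isometry claim: for $\varphi,\psi\in\StateSpace(\D_1)$ one has $\Kantorovich{\Lip}(\varphi\circ\eta_1,\psi\circ\eta_1)\le\Kantorovich{\Lip_1}(\varphi,\psi)$ since $\Lip\ge\Lip_1\circ\eta_1$; for the reverse I take $d_1$ with $\Lip_1(d_1)\le 1$, lift $\pi_2(d_1)$ to $d_2$ with $\rho_1(d_2)=\pi_2(d_1)$ and $\Lip_2(d_2)\le 1+\delta$, so that $(1+\delta)^{-1}(d_1,d_2)$ lies in the unit ball of $\Lip$ and recovers $\varphi(d_1)-\psi(d_1)$ up to the factor $(1+\delta)^{-1}$; letting $\delta\to0$ finishes it, and symmetrically for $\eta_2$.

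Finally, for the extent bound, I identify $\StateSpace(\A)$ and $\StateSpace(\alg{E})$ with their images under the transposed legs inside $\StateSpace(\D_1\oplus\D_2)$; since these images are contained in $\StateSpace(\D_1\oplus\D_2)$, it suffices to bound, for each $\xi$, its distance to $(\pi_1\circ\eta_1)^\ast\StateSpace(\A)$, and symmetrically for $\alg{E}$. Every state on a direct sum decomposes as $\xi=t\,\xi_1\circ\eta_1+(1-t)\,\xi_2\circ\eta_2$ with $\xi_j\in\StateSpace(\D_j)$ and $t\in[0,1]$. For the $\A$-side I would push $\xi_1$ to some $\mu''\circ\pi_1$ within $\tunnelextent{\tau_1}{}$ (extent of $\tau_1$, then the $\eta_1$-isometry), and push $\xi_2$ in three steps: to $\nu\circ\rho_1$ within $\tunnelextent{\tau_2}{}$ (extent of $\tau_2$), then across the summands via $\Kantorovich{\Lip}(\nu\circ\pi_2\circ\eta_1,\nu\circ\rho_1\circ\eta_2)\le\varepsilon$ (this is exactly where $\varepsilon$ enters, using $\norm{\pi_2(d_1)-\rho_1(d_2)}{\B}\le\varepsilon$ on the unit ball of $\Lip$), and finally to $\mu'\circ\pi_1$ within $\tunnelextent{\tau_1}{}$. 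Convexity of $\Kantorovich{\Lip}$ in each argument, applied to $\mu=t\mu''+(1-t)\mu'\in\StateSpace(\A)$, then yields $\Kantorovich{\Lip}(\xi,\mu\circ\pi_1\circ\eta_1)\le\tunnelextent{\tau_1}{}+\tunnelextent{\tau_2}{}+\varepsilon$. The mirror-image argument bounds the $\alg{E}$-distance by the same quantity, and taking the maximum gives $\tunnelextent{\tau_3}{}\le\tunnelextent{\tau_1}{}+\tunnelextent{\tau_2}{}+\varepsilon$.
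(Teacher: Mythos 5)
Your proposal is correct, and its overall architecture matches the paper's: verify that $(\D_1\oplus\D_2,\Lip)$ is a {\Lqcms}, that $\tau_3$ is a tunnel and that $\eta_1,\eta_2$ induce state-space isometries, then bound the extent by decomposing a state of the direct sum as $t\,\xi_1\circ\eta_1+(1-t)\,\xi_2\circ\eta_2$, chaining one component through $\B$ in three steps (extent of one tunnel, the $\varepsilon$-coupling, extent of the other) and the other component in one step, and finishing with convexity of $\Kantorovich{\Lip}$; your extent computation is the mirror image of the paper's (you treat the $\A$-side explicitly, the paper treats the $\alg{E}$-side). The genuine divergence is in how the analytic core is established. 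The paper shows that $\mathrm{N}:(d_1,d_2)\mapsto\frac{1}{\varepsilon}\norm{\pi_2(d_1)-\rho_1(d_2)}{\B}$ is a bridge in the sense of \cite{Rieffel00}*{Definition 5.1} (also checking it is Leibniz), and then invokes \cite{Rieffel00}*{Theorem 5.2} to obtain in one stroke that $\Lip$ is a lower semicontinuous Lip-norm whose quotients on $\sa{\D_1}$ and $\sa{\D_2}$ are $\Lip_1$ and $\Lip_2$ (whence the isometries), together with \cite{Rieffel00}*{Proposition 3.7} to compose quotients and get the tunnel property. You instead prove these facts directly: the weak-$\ast$ metrization via Rieffel's total-boundedness characterization of Lip-norms --- and your observation that the coupling term forces the two normalizing scalars to satisfy $|s_1-s_2|\leq\varepsilon$, so that centering by the single scalar $s_1\unit$ keeps both coordinates in totally bounded sets, is exactly the right point --- and the quotient, tunnel, and isometry claims via the lift-with-matching-image device ($\rho_1(d_2)=\pi_2(d_1)$ so $\mathrm{N}(d_1,d_2)=0$). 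Note that this device is precisely the computation the paper uses to verify condition (3) of Rieffel's bridge definition, so the underlying mathematics coincides; what your route buys is self-containedness and an explicit account of why the $\varepsilon$-coupling term is exactly what glues the two summands metrically, at the cost of redoing work that \cite{Rieffel00} already packages, which is what makes the paper's proof shorter.
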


\begin{proof}
Let $\alg{F} = \D_1\oplus\D_2$. We first observe that $\Lip$ is a Leibniz lower semi-continuous Lip-norm on $\alg{F}$. To this end, let us show that the map:
\begin{equation*}
\mathrm{N} : d_1,d_2 \in \alg{F} \longmapsto \frac{1}{\varepsilon}\norm{\pi_2(d_1)-\rho_1(d_2)}{\B}
\end{equation*}
is a bridge in the sense of \cite[Definition 5.1]{Rieffel00}, which moreover satisfies the Leibniz inequality. We start with the latter property.

If $d_1,e_1 \in\sa{\D_1}$ and $d_2,e_2 \in \sa{\D_2}$, then:
\begin{equation}\label{tunnel-composition-thm-eq0}
\begin{split}
\mathrm{N}&(d_1e_1,d_2e_2)\\ 
&= \frac{1}{\varepsilon}\norm{\pi_2(d_1e_1) - \rho_1(d_2e_2)}{\B} \\
&\leq \frac{1}{\varepsilon}\norm{\pi_2(d_1)\pi_2(e_1) - \pi_2(d_1)\rho_1(e_2)}{\B} + \frac{1}{\varepsilon}\norm{\pi_2(d_1)\rho_1(e_2) - \rho_1(d_2)\rho_1(e_2)}{\B}\\
&\leq \frac{\norm{d_1}{\D_1}}{\varepsilon}\norm{\pi_2(e_1) - \rho_1(e_2)}{\B} + \frac{\norm{e_2}{\D_2}}{\varepsilon} \norm{\pi_2(d_1) - \rho_1(d_2)}{\D}\\
&\leq \norm{(d_1,d_2)}{\alg{F}}\mathrm{N}(e_1,e_2) + \norm{(e_1,e_2)}{\alg{F}}\mathrm{N}(d_1,d_2)\text{,}
\end{split}
\end{equation}
since $\norm{f_1,f_2}{\alg{F}} = \max\left\{ \norm{f_1}{\D_1},\norm{f_2}{\D_2} \right\}$ for all $(f_1,f_2) \in \alg{F}$. Since $\mathrm{N}$ is a seminorm on $\D_1\oplus\D_2$, it follows from Inequality (\ref{tunnel-composition-thm-eq0}) that $\mathrm{N}$ satisfies:
\begin{equation*}
\mathrm{N}(\Jordan{(d_1,d_2)}{(e_1,e_2)}) \leq \norm{(d_1,d_2)}{\alg{F}}\mathrm{N}(e_1,e_2) + \norm{(e_1,e_2)}{\alg{F}}\mathrm{N}(d_1,d_2)\text{,}
\end{equation*}
and
\begin{equation*}
\mathrm{N}(\Lie{(d_1,d_2)}{(e_1,e_2)}) \leq \norm{(d_1,d_2)}{\alg{F}}\mathrm{N}(e_1,e_2) + \norm{(e_1,e_2)}{\alg{F}}\mathrm{N}(d_1,d_2)\text{,}
\end{equation*}
for all $(d_1,d_2),(e_1,e_2)\in\alg{F}$. From this, it follows easily that $\Lip$ satisfies the Leibniz inequalities (\ref{Leibniz-Jordan-eq}) and (\ref{Leibniz-Lie-eq}).

We now check the three conditions of \cite[Definition 5.1]{Rieffel00}. By construction, $\mathrm{N}$ is continuous for the norm of $\alg{F}$. Moreover, since $\pi_2$ and $\rho_1$ are unital, we have $\mathrm{N}(\unit_{\D_1},\unit_{\D_2}) = 0$. On the other hand $\mathrm{N}(\unit_{\D_1},0) = \frac{1}{\varepsilon} > 0$. 

Last, let $\delta > 0$ and $d_1 \in \sa{\D_1}$. Let $b = \pi_2(d_1) \in \sa{\B}$ and note that since $(\D_1,\Lip_1,\pi_1,\pi_2)$ is a tunnel to $(\B,\Lip_\B)$, we have $\Lip_\B(b) \leq \Lip_1(d_1)$. 

Since $(\D_2,\Lip_2,\rho_1,\rho_2)$ is a tunnel from $(\B,\Lip_\B)$, there exists $d_2 \in \sa{\D_2}$ such that $\rho_1(d_2) = b$ and:
\begin{equation*}
\Lip_2(d_2) \leq \Lip_\B(b) + \delta \leq \Lip_1(d_1) + \delta\text{.}
\end{equation*}
Since $\pi_2(d_1) = b = \rho_1(d_2)$ by construction, we have $\mathrm{N}(d_1,d_2) = 0$. Consequently:
\begin{equation*}
\forall d_1 \in \sa{\D_1}\forall \delta > 0 \quad \exists d_2 \in \sa{\D_2} \quad \max\left\{\Lip_2(d_2), \mathrm{N}(d_1,d_2)\right\} \leq \Lip_1(d_1) + \delta\text{.}
\end{equation*}

The construction is symmetric in $\D_1$ and $\D_2$, so all three conditions of \cite[Definition 5.1]{Rieffel00} are verified.

By \cite[Theorem 5.2]{Rieffel00}, we conclude that $\Lip$ is a lower semi-continuous Lip-norm on $\alg{F} = \D_1\oplus \D_2$, since both $\Lip_1$ and $\Lip_2$ are lower semi-continuous. Thus, $(\alg{F},\Lip)$ is a {\Lqcms}.

Moreover, we observe that the quotient of $\Lip$ on $\sa{\D_1}$ (resp. on $\sa{\D_2}$) is $\Lip_1$ (resp. $\Lip_2$), again by \cite[Theorem 5.2]{Rieffel00}. In particular, this proves that $\varphi \in \StateSpace(\D_1)\mapsto \varphi\circ\eta_1$ and $\varphi\in\StateSpace(\D_2)\mapsto\varphi\circ\eta_2$ are isometries from, respectively, $(\StateSpace(\D_1),\Kantorovich{\Lip_1})$ and $(\StateSpace(\D_2),\Kantorovich{\Lip_2})$ into $(\StateSpace(\D_1\oplus\D_2),\Kantorovich{\Lip})$.

By \cite[Proposition 3.7]{Rieffel00}, since $\Lip_\A$ is the quotient of $\Lip_1$ for $\pi_1$ by definition of $\tau_1$, we conclude that $\Lip_\A$ is the quotient of $\Lip$ for $\pi_1\circ\eta_1$. Similarly, $\Lip_{\alg{E}}$ is the quotient of $\Lip$ for $\rho_2\circ\eta_2$. 

Therefore, we have shown that $\tau_3 = (\alg{F},\Lip,\pi_1\circ\eta_1,\rho_2\circ\eta_2)$ is a tunnel from $(\A,\Lip_\A)$ to $(\alg{E},\Lip_{\alg{E}})$. 

It remains to compute the extent of $\tau_3$.

Let $\varphi \in \StateSpace(\alg{F})$. Since $\alg{F} = \D_1\oplus\D_2$, there exists $t \in [0,1]$, $\varphi_1 \in\StateSpace(\D_1)$ and $\varphi_2\in\StateSpace(\D_2)$ such that $\varphi = t\varphi_1\circ\eta_1 + (1-t)\varphi_2\circ\eta_2$.  

By definition of the extent of $\tau_1$, there exists $\psi \in \StateSpace(\B)$ such that:
\begin{equation*}
\Kantorovich{\Lip}(\varphi_1\circ\eta_1,\psi\circ\pi_2\circ\eta_1) = \Kantorovich{\Lip_1}(\varphi_1, \psi\circ\pi_2) \leq \tunnelextent{\tau_1}{} \text{.}
\end{equation*}
Now, by definition of the extent of $\tau_2$, there exists $\theta \in \StateSpace(\alg{E})$ such that:
\begin{equation*}
\Kantorovich{\Lip}(\psi\circ\rho_1\circ\eta_2, \theta\circ\rho_2\circ\eta_2) = \Kantorovich{\Lip_2}(\psi\circ\rho_1, \theta\circ\rho_2) \leq \tunnelextent{\tau_2}{} \text{.}
\end{equation*}
Let $(d_1,d_2) \in \alg{F}$ with $\Lip(d_1,d_2) \leq 1$. Then by construction of $\Lip$, we have:
\begin{equation*}
\Lip_1(d_1)\leq 1, \Lip_2(d_2)\leq 1\text{ and }\norm{\pi_2(d_1)-\rho_1(d_2)}{\B} \leq \varepsilon\text{.}
\end{equation*}
Thus:
\begin{equation*}
\begin{split}
|\varphi_1(\eta_1(d_1,d_2)) &- \theta(\rho_2\circ\eta_2(d_1,d_2))|\\
&= |\varphi_1(d_1) - \theta(\pi_2(d_2))|\\
&\leq |\varphi_1(d_1) - \psi(\pi_2(d_1))| + |\psi(\pi_2(d_1)) - \psi(\rho_1(d_2))| \\
&\quad + |\psi(\rho_1(d_2)) - \theta(\rho_2(d_2))|\\
&\leq \Kantorovich{\Lip_1}(\varphi_1,\psi\circ\pi_2) + \varepsilon + \Kantorovich{\Lip_2}(\psi\circ\rho_1,\theta\circ\rho_2) \\
&\leq \tunnelextent{\tau_1}{} + \varepsilon + \tunnelextent{\tau_2}{}\text{.}
\end{split}
\end{equation*}
Thus:
\begin{equation*}
\Kantorovich{\Lip}(\varphi_1\circ\eta_1,\theta\circ\rho_2\circ\eta_2) \leq \tunnelextent{\tau_1}{} + \tunnelextent{\tau_2}{} + \varepsilon\text{.}
\end{equation*}

By definition of the extent of $\tau_2$, we can find $\theta_2 \in \StateSpace(\alg{E})$ such that:
\begin{equation*}
\Kantorovich{\Lip}(\varphi_2\circ\eta_2,\theta_2\circ\rho_2\circ\eta_2) = \Kantorovich{\Lip_2}(\varphi_2,\theta_2\circ\rho_2) \leq \tunnelextent{\tau_2}{}\text{.}
\end{equation*}
Since the {\mongekant} $\Kantorovich{\Lip}$ is convex in each of its variable by construction, we conclude:
\begin{equation*}
\begin{split}
\Kantorovich{\Lip}(\varphi, (t\theta+(1-t)\theta_2)\circ\rho_2\circ\eta_2) &\leq \max \left\{ \tunnelextent{\tau_1}{} + \varepsilon + \tunnelextent{\tau_2}{}, \tunnelextent{\tau_2}{} \right\} \\
&= \tunnelextent{\tau_1}{} + \varepsilon + \tunnelextent{\tau_2}{}\text{,}
\end{split}
\end{equation*}
and we note that $t \theta+ (1-t)\theta_2 \in \StateSpace(\alg{E})$. Thus, as $\varphi\in\StateSpace(\alg{F})$ was arbitrary, we conclude:
\begin{equation*}
\Haus{\Kantorovich{\Lip}}(\StateSpace(\alg{F}),(\rho_2\circ\eta_2)^\ast(\StateSpace(\alg{E}))) \leq \tunnelextent{\tau_1}{} + \tunnelextent{\tau_2}{} + \varepsilon \text{.}
\end{equation*}

By symmetry, we would obtain in the same manner that for any $\varphi \in\StateSpace(\alg{F})$ there exists $\theta\in\StateSpace(\A)$ with:
\begin{equation*}
\Kantorovich{\Lip}(\varphi,\theta\circ\pi_1\circ\eta_1) \leq \tunnelextent{\tau_1}{} + \tunnelextent{\tau_2}{} + \varepsilon \text{.}
\end{equation*}

Therefore, by Definition (\ref{tunnel-extent-def}):
\begin{equation*}
\tunnelextent{\tau_3}{} \leq \tunnelextent{\tau_1}{} + \tunnelextent{\tau_2}{} +\varepsilon \text{,}
\end{equation*}
which concludes our proposition.
\end{proof}

Thanks to Theorem (\ref{tunnel-composition-thm}), we can thus define:

\begin{definition}
A pair $(\tau_1,\tau_2)$ of tunnels, with $\tau_1 = (\D_1,\Lip_1,\pi_1,\pi_2)$ and $\tau_2 = (\D_2,\Lip_2,\rho_1,\rho_2)$, such that the co-domain $\codom{\pi_2}$ of $\pi_2$ equals to the codomain of $\rho_1$ is said to be \emph{a composable pair of tunnels}.
\end{definition}

\begin{definition}
For any $\varepsilon > 0$, and for any composable pair $(\tau_1,\tau_2)$ of tunnels, with $\tau_1 = (\D_1,\Lip_1,\pi_1,\pi_2)$ and $\tau_2 = (\D_2,\Lip_2,\rho_1,\rho_2)$, the tunnel:
\begin{equation*}
\left(D_1\oplus\D_2,\Lip,\pi_1\circ\eta_1,\rho_2\circ\eta_2\right)
\end{equation*}
where:
\begin{equation*}
\Lip : (d_1,d_2) \in \D_1\oplus\D_2 \longmapsto \max\left\{ \Lip_1(d_1),\Lip_2(d_2), \frac{1}{\varepsilon}\norm{\pi_2(d_1)-\rho_1(d_2)}{\codom{\pi_2}} \right\}
\end{equation*}
while $\eta_1 : (d_1,d_2)\in\D_1\oplus\D_2\mapsto d_1\in\D_1$ and $\eta_2 : (d_1,d_2)\in\D_2 \mapsto d_2$, as defined in Proposition (\ref{tunnel-composition-thm}), is called the \emph{$\varepsilon$-composition} of $\tau_1$ and $\tau_2$, denoted by $\tau_1\circ_\varepsilon \tau_2$.
\end{definition} 

The dual propinquity is, in fact, a family of metrics indexed by classes of tunnels with enough conditions to ensure that the construction proposed in \cite{Latremoliere13c} leads to an actual metric. The variation we propose in this paper involve an additional constraint compared to \cite[Definition 3.11]{Latremoliere13c}:

\begin{definition}
A class $\mathcal{T}$ of tunnels is \emph{closed under compositions} when, for all $\varepsilon > 0$ and for all composable pair $(\tau_1,\tau_2)$ in $\mathcal{T}$, there exists $\delta \in (0,\varepsilon ]$ such that $\tau_1\circ_\delta \tau_2 \in \mathcal{T}$.
\end{definition} 

A class of tunnels is appropriate with respect to a nonempty class of {\Lqcms s} when it satisfies the following conditions designed to ensure our variant of the dual propinquity is indeed a metric:

\begin{definition}
Let $\mathcal{C}$ be a nonempty class of {\Lqcms s}. A class $\mathcal{T}$ of tunnels is \emph{appropriate} for $\mathcal{C}$ when:
\begin{enumerate}
\item for all $(\A,\Lip_\A)$ and $(\B,\Lip_\B) \in \mathcal{C}$, there exists $\tau \in \mathcal{T}$ from $(\A,\Lip_\A)$ to $(\B,\Lip_\B)$,
\item if there exists an isometric isomorphism $h : (\A,\Lip_\A) \rightarrow (\B,\Lip_\B)$ with $(\A,\Lip_\A)$ and $(\B,\Lip_\B)$ in $\mathcal{C}$, then $(\A,\Lip_\A,\mathrm{id}_\A,h)$ and $(\B,\Lip_\B,h^{-1},\mathrm{id}_\B)$ are elements of $\mathcal{T}$, where $\mathrm{id}_{\alg{E}}$ is the identity automorphism of any C*-algebra $\alg{E}$,
\item if $\tau$ is in $\mathcal{T}$ then $\tau^{-1}$ is in $\mathcal{T}$,
\item if $(\D,\Lip,\pi,\rho) \in \mathcal{T}$, then the codomains of $\pi$ and $\rho$ are elements of $\mathcal{C}$,
\item $\mathcal{T}$ is closed under compositions.
\end{enumerate}
\end{definition}

In particular, we note that if a class of tunnels is appropriate for some nonempty class $\mathcal{C}$ of {\Lqcms s}, then it is compatible with this class \cite[Definition 3.11]{Latremoliere13c}.

We now propose our new metric, which is a natural variation of the dual Gromov-Hausdorff Propinquity, adapted to take advantage of Theorem (\ref{tunnel-composition-thm}):

\begin{definition}\label{propinquity-def}
Let $\mathcal{C}$ be a nonempty class of {\Lqcms s} and let $\mathcal{T}$ be an appropriate class of tunnels for $\mathcal{C}$. The \emph{Gromov-Hausdorff $\mathcal{T}$-Propinquity} $\propinquity{\mathcal{T}}((\A,\Lip_\A),(\B,\Lip_\B))$ between two {\Lqcms s} $(\A,\Lip_\A)$ and $(\B,\Lip_\B)$ is the nonnegative real number:
\begin{equation*}
\inf\left\{\tunnelextent{\tau}{} \middle\vert \tau \in \tunnelset{\A,\Lip_\A}{\B,\Lip_\B}{\mathcal{T}} \right\}\text{.}
\end{equation*}
\end{definition}

We now prove that our propinquity satisfies the triangle inequality.

\begin{theorem}\label{triangle-thm}
Let $\mathcal{C}$ be a nonempty class of {\Lqcms s} and let $\mathcal{T}$ be an appropriate class of tunnels for $\mathcal{C}$. For any {\Lqcms s} $(\A,\Lip_\A)$, $(\B,\Lip_\B)$ and $(\alg{E},\Lip_{\alg{E}})$ in $\mathcal{T}$, we have:
\begin{equation*}
\propinquity{\mathcal{T}}((\A,\Lip_\A),(\alg{E},\Lip_{\alg{E}}))\leq \propinquity{\mathcal{T}}((\A,\Lip_\A),(\B,\Lip_\B)) + \propinquity{\mathcal{T}}((\B,\Lip_\B),(\alg{E},\Lip_{\alg{E}}))\text{.}
\end{equation*}
\end{theorem}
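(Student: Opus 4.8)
The plan is to reduce the triangle inequality directly to the tunnel-composition result of Theorem (\ref{tunnel-composition-thm}), exploiting the fact that the propinquity is now defined as an infimum of extents over single tunnels rather than over journeys. Fix three {\Lqcms s} $(\A,\Lip_\A)$, $(\B,\Lip_\B)$, $(\alg{E},\Lip_{\alg{E}})$ in $\mathcal{C}$ and let $\varepsilon > 0$ be arbitrary. By Definition (\ref{propinquity-def}), I can choose a tunnel $\tau_1 \in \tunnelset{\A,\Lip_\A}{\B,\Lip_\B}{\mathcal{T}}$ with $\tunnelextent{\tau_1}{} \leq \propinquity{\mathcal{T}}((\A,\Lip_\A),(\B,\Lip_\B)) + \frac{\varepsilon}{3}$, and similarly a tunnel $\tau_2 \in \tunnelset{\B,\Lip_\B}{\alg{E},\Lip_{\alg{E}}}{\mathcal{T}}$ with $\tunnelextent{\tau_2}{} \leq \propinquity{\mathcal{T}}((\B,\Lip_\B),(\alg{E},\Lip_{\alg{E}})) + \frac{\varepsilon}{3}$. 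Since the codomain of the second projection of $\tau_1$ and the codomain of the first projection of $\tau_2$ are both $\B$, the pair $(\tau_1,\tau_2)$ is composable.

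Next I invoke the hypothesis that $\mathcal{T}$ is closed under compositions: applied with the target tolerance $\frac{\varepsilon}{3}$, it furnishes a $\delta \in \left(0,\frac{\varepsilon}{3}\right]$ such that the $\delta$-composition $\tau_3 = \tau_1 \circ_\delta \tau_2$ lies in $\mathcal{T}$. By construction $\tau_3$ is a tunnel from $(\A,\Lip_\A)$ to $(\alg{E},\Lip_{\alg{E}})$, so it is an admissible competitor in the infimum defining $\propinquity{\mathcal{T}}((\A,\Lip_\A),(\alg{E},\Lip_{\alg{E}}))$. The extent estimate of Theorem (\ref{tunnel-composition-thm}) then gives
\begin{equation*}
\tunnelextent{\tau_3}{} \leq \tunnelextent{\tau_1}{} + \tunnelextent{\tau_2}{} + \delta \text{.}
\end{equation*}
Bounding each term by the corresponding propinquity plus $\frac{\varepsilon}{3}$, and using $\delta \leq \frac{\varepsilon}{3}$, yields
\begin{equation*}
\propinquity{\mathcal{T}}((\A,\Lip_\A),(\alg{E},\Lip_{\alg{E}})) \leq \tunnelextent{\tau_3}{} \leq \propinquity{\mathcal{T}}((\A,\Lip_\A),(\B,\Lip_\B)) + \propinquity{\mathcal{T}}((\B,\Lip_\B),(\alg{E},\Lip_{\alg{E}})) + \varepsilon \text{.}
\end{equation*}
Since $\varepsilon > 0$ was arbitrary, letting $\varepsilon \to 0^+$ gives the desired inequality.

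There is no genuine obstacle remaining in this argument, since the entire analytic difficulty---verifying that the composed object is a legitimate tunnel whose extent is controlled by the sum of the two extents plus the gluing parameter---has already been discharged in Theorem (\ref{tunnel-composition-thm}). The only points requiring care are bookkeeping ones: I must confirm that the two chosen tunnels are actually composable (which is exactly the matching-codomain condition, guaranteed because both touch $\B$), and that the closure-under-compositions axiom is what legitimizes replacing the arbitrary $\varepsilon$-composition by a member $\tau_1 \circ_\delta \tau_2$ of $\mathcal{T}$ with $\delta \leq \frac{\varepsilon}{3}$. This last step is precisely the reason the axiom is phrased as the existence of \emph{some} $\delta \in (0,\varepsilon]$ rather than the composition for every $\varepsilon$: it lets us stay inside the prescribed class $\mathcal{T}$ while keeping the error term small, which is the whole payoff of defining the propinquity through the extent of single composable tunnels and entirely bypasses the journeys used in \cite{Latremoliere13c}.
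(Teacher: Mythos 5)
Your proposal is correct and follows essentially the same route as the paper's own proof: choose tunnels within $\frac{\varepsilon}{3}$ of each propinquity, use the closure-under-compositions axiom to obtain a $\delta$-composition in $\mathcal{T}$ with $\delta \leq \frac{\varepsilon}{3}$, apply the extent estimate of Theorem (\ref{tunnel-composition-thm}), and let $\varepsilon \to 0$. If anything, your bookkeeping is slightly cleaner, since you consistently bound the \emph{extent} of the chosen tunnels (which is what Definition (\ref{propinquity-def}) actually requires), whereas the paper's proof writes the length symbol $\lambda$ in that step.
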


\begin{proof}
Let $\varepsilon > 0$. By Definition of $\propinquity{\mathcal{T}}$, there exists a tunnel $\tau_1 = (\D_1,\Lip_1,\pi_1,\pi_2)$ from $(\A,\Lip_\A)$ to $(\B,\Lip_\B)$ such that:
\begin{equation*}
\tunnellength{\tau_1}{} \leq \propinquity{\mathcal{T}}((\A,\Lip_\A),(\B,\Lip_\B)) + \frac{1}{3}\varepsilon\text{.}
\end{equation*}
Similarly, there exists a tunnel $\tau_2 = (\D_2,\Lip_2,\rho_2,\rho_3)$ from $(\B,\Lip_\B)$ to $(\alg{E},\Lip_{\alg{E}})$ such that:
\begin{equation*}
\tunnellength{\tau_2}{} \leq \propinquity{\mathcal{T}}((\B,\Lip_\B),(\alg{E},\Lip_{\alg{E}})) + \frac{1}{3}\varepsilon\text{.}
\end{equation*}

Since $\mathcal{T}$ is appropriate, there exists $\delta \in \left(0,\frac{\varepsilon}{3}\right]$ such that $\tau_3 = \tau_1\circ_\delta\tau_2 \in \mathcal{T}$. By Theorem (\ref{tunnel-composition-thm}), we have:
\begin{equation*}
\tunnelextent{\tau_3}{} \leq \tunnelextent{\tau_1}{} + \tunnelextent{\tau_2}{} + \frac{\varepsilon}{3} \text{.}
\end{equation*}

Therefore:
\begin{equation*}
\begin{split}
\propinquity{\mathcal{T}}((\A,\Lip_\A),(\alg{E},\Lip_{\alg{E}})) &\leq \tunnelextent{\tau_3}{}\\
&\leq \tunnelextent{\tau_1}{} + \tunnelextent{\tau_2}{} + \frac{\varepsilon}{3}\\
&\leq \propinquity{\mathcal{T}}((\A,\Lip_\A),(\B,\Lip_\B)) + \propinquity{\mathcal{T}}((B,\Lip_\B),(\alg{E},\Lip_{\alg{E}})) + \varepsilon \text{.}
\end{split}
\end{equation*}
As $\varepsilon > 0$ is arbitrary, our proof is complete.
\end{proof}

We conclude our paper with the statement that the propinquity is a metric, up to isometric isomorphism, which is equivalent to the dual propinquity:

\begin{theorem}
If $\mathcal{C}$ be a nonempty class of {\Lqcms s} and if $\mathcal{T}$ be an appropriate class of tunnels for $\mathcal{C}$, then $\propinquity{\mathcal{T}}$ is a metric on $\mathcal{C}$ up to isometric isomorphism and equivalent to $\dpropinquity{\mathcal{T}}$, i.e. for any $(\A,\Lip_\A)$,$(\B,\Lip_\B)$ and $(\D,\Lip_\D)$ in $\mathcal{C}$:
\begin{enumerate}
\item $\propinquity{\mathcal{T}}((\A,\Lip_\A),(\B,\Lip_\B)) \in [0,\infty)$,
\item $\propinquity{\mathcal{T}}((\A,\Lip_\A),(\B,\Lip_\B)) = \propinquity{\mathcal{T}}((\B,\Lip_\B),(\A,\Lip_\A))$,
\item $\propinquity{\mathcal{T}}((\A,\Lip_\A),(\alg{E},\Lip_{\alg{E}}))\leq \propinquity{\mathcal{T}}((\A,\Lip_\A),(\B,\Lip_\B)) + \propinquity{\mathcal{T}}((\B,\Lip_\B),(\alg{E},\Lip_{\alg{E}}))$,
\item $\propinquity{\mathcal{T}}((\A,\Lip_\A),(\B,\Lip_\B)) = 0$ if and only if there exists an isometric isomorphism $h : (\A,\Lip_\A) \rightarrow (\B,\Lip_\B)$,
\item $\dpropinquity{\mathcal{T}}((\A,\Lip_\A),(\B,\Lip_\B)) \leq \propinquity{\mathcal{T}}((\A,\Lip_\A),(\B,\Lip_\B)) \leq 2\dpropinquity{\mathcal{T}}((\A,\Lip_\A),(\B,\Lip_\B))$.
\end{enumerate}
In particular, if $\mathcal{C}$ is the class of all {\Lqcms s} and $\mathcal{LT}$ is the class of all tunnels between {\Lqcms s}, then $\propinquity{\mathcal{LT}} = \propinquity{}$ is a complete metric such that:
\begin{equation*}
\propinquity{}((\A,\Lip_\A),(\B,\Lip_\B)) \leq 2\max\left\{\diam{\StateSpace(\A)}{\Kantorovich{\Lip_\A}},\diam{\StateSpace(\B)}{\Kantorovich{\Lip_\B}}\right\}\text{.}
\end{equation*}
\end{theorem}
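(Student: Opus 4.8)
The plan is to derive every assertion from the three results already established --- Proposition (\ref{length-vs-extent-prop}), Theorem (\ref{tunnel-composition-thm}) and Theorem (\ref{triangle-thm}) --- together with the known properties of the dual propinquity in \cite{Latremoliere13c}, transported through the equivalence asserted in (5). Assertion (3) is precisely Theorem (\ref{triangle-thm}), so nothing remains there. For the nonnegativity and finiteness in (1), I would note that the extent is a maximum of Hausdorff distances, hence nonnegative, and that appropriateness provides at least one tunnel $\tau \in \tunnelset{\A,\Lip_\A}{\B,\Lip_\B}{\mathcal{T}}$; since $(\StateSpace(\D),\Kantorovich{\Lip_\D})$ is a compact metric space, the Hausdorff distances defining $\tunnelextent{\tau}{}$ are finite, so the infimum is finite. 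For the symmetry (2), the extent of the reversed tunnel $\tau^{-1}$ equals that of $\tau$ because Definition (\ref{tunnel-extent-def}) is manifestly symmetric in the two state spaces, while appropriateness condition (3) makes $\tau \mapsto \tau^{-1}$ a bijection between the two relevant tunnel sets; hence the two infima agree.

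The central step is the equivalence (5), from which (4) and completeness will follow. For the lower bound $\dpropinquity{\mathcal{T}} \leq \propinquity{\mathcal{T}}$, I would observe that any single tunnel $\tau$ is a one-step journey, so $\dpropinquity{\mathcal{T}} \leq \tunnellength{\tau}{} \leq \tunnelextent{\tau}{}$ by Proposition (\ref{length-vs-extent-prop}); taking the infimum over $\tau$ gives the claim. For the upper bound $\propinquity{\mathcal{T}} \leq 2\dpropinquity{\mathcal{T}}$, I would start from a journey $(\tau_1,\ldots,\tau_n)$ whose length is within $\eta$ of $\dpropinquity{\mathcal{T}}$ and collapse it into a single tunnel by repeated $\delta$-composition as in Theorem (\ref{tunnel-composition-thm}): compose $\tau_1$ with $\tau_2$, then the result with $\tau_3$, and so on. At each stage, closure under compositions keeps the composite inside $\mathcal{T}$ while letting me pick the parameter $\delta_k$ small enough that $\sum_k \delta_k < \eta$; the extent estimate of Theorem (\ref{tunnel-composition-thm}) then telescopes to $\tunnelextent{\sigma}{} \leq \sum_{j} \tunnelextent{\tau_j}{} + \eta \leq 2\sum_j \tunnellength{\tau_j}{} + \eta$, again via Proposition (\ref{length-vs-extent-prop}). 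Letting $\eta \to 0$ produces the factor $2$.

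With (5) in hand I would settle the coincidence property (4). If $h$ is an isometric isomorphism, appropriateness furnishes the tunnel $(\A,\Lip_\A,\mathrm{id}_\A,h) \in \mathcal{T}$, whose domain state space is $\StateSpace(\A)$ and for which both $\mathrm{id}_\A^\ast$ and $h^\ast$ carry the state spaces onto $\StateSpace(\A)$; hence its extent is zero and $\propinquity{\mathcal{T}} = 0$. Conversely, $\propinquity{\mathcal{T}} = 0$ forces $\dpropinquity{\mathcal{T}} = 0$ by the lower bound in (5), and the coincidence property of the dual propinquity \cite{Latremoliere13c}*{Theorem 4.17} then yields the isometric isomorphism. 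Completeness for the class of all \Lqcms s transfers the same way: since $\dpropinquity{}$ and $\propinquity{}$ are bi-Lipschitz equivalent by (5), they share the same Cauchy and convergent sequences, and $\dpropinquity{}$ is complete by \cite{Latremoliere13c}*{Theorem 6.27}.

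Finally, for the diameter bound I would invoke the corresponding estimate for the dual propinquity in \cite{Latremoliere13c} --- that $\dpropinquity{}$ between two spaces is bounded by the larger of their state-space diameters --- and combine it with the upper bound in (5) to obtain $2\max\{\ldots\}$; alternatively, one exhibits a direct tunnel on $\A \oplus \B$ whose Lip-norm adjoins to $\max\{\Lip_\A,\Lip_\B\}$ a rank-one bridge term $\frac{1}{r}|\mu(a)-\nu(b)|$ for fixed states $\mu,\nu$ (so that its kernel stays one-dimensional) and estimates the extent directly. I expect the main obstacle to be the bookkeeping in the iterated composition of (5): one must verify at each stage that consecutive composites remain composable (their shared codomain being the appropriate intermediate space) and that the accumulated parameters $\delta_k$ stay summably small, so that a single error term $\eta$ genuinely absorbs all of them.
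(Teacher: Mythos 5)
Your proposal is correct, and for assertions (1), (2), (3), (4), completeness, and the diameter bound it follows the paper's proof essentially step for step. The one place where you take a genuinely different route is the upper bound $\propinquity{\mathcal{T}} \leq 2\dpropinquity{\mathcal{T}}$ in assertion (5). You collapse a near-optimal journey $(\tau_1,\ldots,\tau_n)$ into a single tunnel of $\mathcal{T}$ by iterating the $\delta$-composition of Theorem (\ref{tunnel-composition-thm}), keeping $\sum_k \delta_k < \eta$; this does work --- closure under compositions lets you choose each $\delta_k \leq \eta/n$, consecutive composites remain composable because the composite's right leg $\rho_2\circ\eta_2$ has the correct codomain, and the extent estimate telescopes --- but it forces exactly the bookkeeping you flag as the main obstacle. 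The paper avoids all of it: since each $\tau_j \in \mathcal{T}$ is itself a tunnel from $(\A_j,\Lip_j)$ to $(\A_{j+1},\Lip_{j+1})$, and these intermediate spaces lie in $\mathcal{C}$ by appropriateness, the definition of the propinquity as an infimum gives $\propinquity{\mathcal{T}}((\A_j,\Lip_j),(\A_{j+1},\Lip_{j+1})) \leq \tunnelextent{\tau_j}{} \leq 2\tunnellength{\tau_j}{}$ via Proposition (\ref{length-vs-extent-prop}), and then the already-established triangle inequality (Theorem (\ref{triangle-thm})) is applied along the journey at the level of real numbers rather than tunnels. In effect your argument unrolls the proof of Theorem (\ref{triangle-thm}) inside the journey, whereas the paper invokes that theorem as a black box; both yield the same factor of $2$, but the paper's version needs no control of accumulated error parameters. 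Two minor points: the paper cites \cite[Theorem 4.16]{Latremoliere13c} (with the remark that its proof applies verbatim to journeys of length one) for the converse in assertion (4), which is the same substance as your appeal to the coincidence property; and your alternative direct construction for the diameter bound is unnecessary, since the paper, like your primary suggestion, simply combines assertion (5) with \cite[Corollary 5.6]{Latremoliere13c}.
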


\begin{proof}
Assertion (1) follows from the fact that there exists a tunnel in $\mathcal{T}$ between any two elements of $\mathcal{C}$. Assertion (2) follows from the fact that $\mathcal{T}$ is closed under taking the inverse of tunnels. Assertion (3) is given by Theorem (\ref{triangle-thm}). 

We now note that if $\mathcal{T}$ is appropriate for $\mathcal{C}$ then it is compatible with $\mathcal{C}$ by \cite[Definition 3.11]{Latremoliere13c}. Thus the dual Gro\-mov-\-Haus\-dorff propinquity $\dpropinquity{\mathcal{T}}$ is defined, and is a metric on $\mathcal{C}$ up to isometric isomorphism by \cite[Theorem 4.17]{Latremoliere13c}.

Let $\varepsilon > 0$. By \cite[Definition 3.21]{Latremoliere13c}, there exists a journey:
\begin{equation*}
\Upsilon = \left( \A_j,\Lip_j,\tau_j,\A_{j+1},\Lip_{j+1} : j = 1,\ldots,n \right)
\end{equation*}
from $(\A,\Lip_\A)$ to $(\B,\Lip_\B)$ in $\mathcal{C}$ with $\tau_j \in \mathcal{T}$ for all $j\in\{1,\ldots,n\}$ and such that:
\begin{equation*}
\sum_{j=1}^n \tunnellength{\tau_j}{} \leq \dpropinquity{\mathcal{T}}((\A,\Lip_\A),(\B,\Lip_\B)) + \frac{\varepsilon}{2}\text{.}
\end{equation*}
By Proposition (\ref{length-vs-extent-prop}), for any tunnel $\tau$ in $\mathcal{T}$, we have, for all $j\in\{1,\ldots,n\}$:
\begin{equation*}
\tunnelextent{\tau_j}{} \leq 2\tunnellength{\tau_j}{}
\end{equation*}
from which we conclude, using Theorem (\ref{triangle-thm}):
\begin{equation*}
\begin{split}
\propinquity{\mathcal{T}}((\A,\Lip_\A),(\B,\Lip_\B)) &\leq \sum_{j=1}^n \propinquity{\mathcal{T}}((\A_j,\Lip_j),(\A_{j+1},\Lip_{j+1})) \\
&\leq \sum_{j=1}^n \tunnelextent{\tau_j}{}\\
&\leq 2 \sum_{j=1}^n\tunnellength{\tau_j}{} \\
&\leq 2\dpropinquity{\mathcal{T}}((\A,\Lip_\A),(\B,\Lip_\B)) + \varepsilon \text{.}
\end{split}
\end{equation*}
As $\varepsilon > 0$, we get that:
\begin{equation*}
\propinquity{\mathcal{T}}((\A,\Lip_\A),(\B,\Lip_\B))\leq 2\dpropinquity{\mathcal{T}}((\A,\Lip_\A),(\B,\Lip_\B))\text{.}
\end{equation*}

On the other hand, let $\varepsilon > 0$. There exists a tunnel $\tau$ in $\mathcal{T}$ such that $\tunnelextent{\tau}{} \leq \propinquity{\mathcal{T}}((\A,\Lip_\A),(\B,\Lip_\B)) + \varepsilon$. By \cite[Definition 3.21]{Latremoliere13c}, we then conclude immediately that:
\begin{equation*}
\begin{split}
\dpropinquity{\mathcal{T}}((\A,\Lip_\A),(\B,\Lip_\B))&\leq \tunnellength{\tau}{}\\
&\leq\tunnelextent{\tau}{} \leq  \propinquity{\mathcal{T}}((\A,\Lip_\A),(\B,\Lip_\B)) + \varepsilon\text{.} 
\end{split}
\end{equation*}
This concludes Assertion (5).

Last, if there exists an isometric isomorphism $h : (\A,\Lip_\A) \rightarrow (\B,\Lip_\B)$ between any two elements of $\mathcal{C}$, the bridge $(\A,\Lip_\A,\mathrm{id}_\A,h)$ lies in $\mathcal{T}$ and has extent $0$, as is trivially checked.

The converse for Assertion (4) follows from Assertion (5) and \cite[Theorem 4.16]{Latremoliere13c} (the proof of \cite[Theorem 4.16]{Latremoliere13c} could be applied directly and unchanged, except for the use of journeys of length always $1$).

By \cite[Theorem 6.27]{Latremoliere13c} and Assertion (5), we conclude that our propinquity $\propinquity{}$ is complete. The bound provided is a corollary of Assertion (5) and \cite[Corollary 5.6]{Latremoliere13c}.
\end{proof}

Since our propinquity is equivalent to the dual propinquity, all convergence results and comparison to other noncommutative extensions of the Gro\-mov-\-Haus\-dorff distance applies to it. We refer the reader to \cite{Latremoliere13c} for these matters.

% \bib, bibdiv, biblist are defined by the amsrefs package.
\begin{bibdiv}
\begin{biblist}

\bib{Blackadar91}{article}{
      author={{B}lackadar, {B}.},
      author={{C}untz, {J}.},
       title={Differential {B}anach algebra norms and smooth subalgebras of {$C^\ast$--A}lgebras},
        date={1991},
     journal={Journal of Operator Theory},
      volume={26},
      number={2},
       pages={255\ndash 282},
       issn={0379-4024},
       review={\MR{94f:46094}},
}

\bib{Connes}{book}{
      author={{C}onnes, A.},
       title={Noncommutative geometry},
   publisher={Academic Press, San Diego},
        date={1994},
        pages={xiv+661},
        isbn={0-12-185860-X},
        review={\MR{95j:46063}},
}

\bib{Connes97}{article}{
      author={{C}onnes, {A}.},
      author={{D}ouglas, {M}.},
      author={{S}chwarz, {A}.},
       title={Noncommutative geometry and matrix theory: Compactification on
  tori},
        date={1998},
     journal={JHEP},
      volume={9802},
      pages={Paper 3, 35 pp.},
      issn={1029--8479},
      review={\MR{99b:58023}},
        note={hep-th/9711162},
}

\bib{Gromov81}{article}{
      author={{G}romov, M.},
       title={Groups of polynomial growth and expanding maps},
        date={1981},
     journal={Publications math{\'e}matiques de l' {I. H. E. S.}},
      volume={53},
       pages={53\ndash 73},
       issn={0073-8301},
       review={\MR{83b:53041}},
}

\bib{Gromov}{book}{
      author={{G}romov, M.},
       title={Metric structures for {R}iemannian and non-{R}iemannian spaces},
      series={Progress in Mathematics},
   publisher={Birkh{\"a}user Boston},
        date={2007},
        pages={xx+585},
        isbn={0-8176-4582-9},        
}

\bib{kerr02}{article}{
      author={{K}err, D.},
       title={Matricial quantum {G}romov-{H}ausdorff distance},
        date={2003},
     journal={J. Funct. Anal.},
      volume={205},
      number={1},
       pages={132\ndash 167},
       issn={0022-1236},
       review={\MR{2004m:46153}},
        note={math.OA/0207282},
}

\bib{Kerr09}{article}{
      author={{K}err, {D}.},
      author={{L}i, {H}.},
       title={On {G}romov--{H}ausdorff convergence of operator metric spaces},
        date={2009},
     journal={J. Oper. Theory},
      volume={1},
      number={1},
       pages={83\ndash 109},
       issn={0379-4024},
       review={\MR{2010h:46087}},
       note={math.0411157},
}

\bib{Latremoliere05}{article}{
      author={{L}atr{\'e}moli{\`e}re, {F}.},
       title={Approximation of the quantum tori by finite quantum tori for the quantum gromov-hausdorff distance},
        date={2005},
     journal={Journal of Funct. Anal.},
      volume={223},
       pages={365\ndash 395},
       issn={0022-1236},
       review={\MR{2006d:46092}},
        note={math.OA/0310214},
}

\bib{Latremoliere05b}{article}{
      author={{L}atr{\'e}moli{\`e}re, {F}.},
       title={Bounded-lipschitz distances on the state space of a
  {C*}-algebra},
        date={2007},
     journal={Tawainese Journal of Mathematics},
      volume={11},
      number={2},
       pages={447\ndash 469},
       issn={1027-5487},
       review={\MR{2008g:46121}},
        note={math.OA/0510340},
}

\bib{Latremoliere13}{article}{
      author={{L}atr{\'e}moli{\`e}re, {F}.},
       title={The {Q}uantum {G}romov-{H}ausdorff {P}ropinquity},
        date={2013},
     journal={Accepted, Trans. Amer. Math. Soc.},
       pages={49 Pages},
        note={ArXiv: 1302.4058},
}

\bib{Latremoliere12b}{article}{
      author={{L}atr{\'e}moli{\`e}re, {F}.},
       title={Quantum locally compact metric spaces},
        date={2013},
     journal={Journal of Functional Analysis},
      volume={264},
      number={1},
       pages={362\ndash 402},
       issn={0022-1236},
       review={\MR{2995712}},
        note={ArXiv: 1208.2398},
}

\bib{Latremoliere13c}{article}{
      author={{L}atr{\'e}moli{\`e}re, {F}.},
       title={Convergence of fuzzy tori and quantum tori for the
  {G}romov--{H}ausdorff {P}ropinquity: an explicit approach.},
        date={2014},
     journal={Accepted, M{\"u}nster Journal of Mathematics},
       pages={41 pages},
        note={ArXiv: 1312.0069},
}

\bib{Latremoliere13b}{article}{
      author={{L}atr{\'e}moli{\`e}re, {F}.},
       title={The dual {G}romov--{H}ausdorff {P}ropinquity},
        date={2015},
     journal={Journal de Math{\'e}matiques Pures et Appliqu{\'e}es},
      volume={103},
      number={2},
       pages={303\ndash 351},
       issn={0021-7824},
       review={\MR{3298361}},
        note={ArXiv: 1311.0104},
}

\bib{li03}{article}{
      author={{L}i, H.},
       title={{$C^\ast$}-algebraic quantum {G}romov-{H}ausdorff distance},
        date={2003},
        note={ArXiv: math.OA/0312003},
}

\bib{McShane34}{article}{
      author={{M}c{S}hane, {E}.~{J}.},
       title={Extension of range of functions},
        date={1934},
     journal={Bull. Amer. Math. Soc.},
      volume={40},
      number={12},
       pages={825\ndash 920},
       issn={0002-9904},
       review={\MR{1562984}},
}

\bib{Ozawa05}{article}{
      author={{O}zawa, {N}.},
      author={{R}ieffel, M.~A.},
       title={Hyperbolic group {$C^\ast$}-algebras and free products
  {$C^\ast$}-algebras as compact quantum metric spaces},
        date={2005},
     journal={Canad. J. Math.},
      volume={57},
       pages={1056\ndash 1079},
       issn={0008-414X},
       review={\MR{2006e:46080}},
        note={ArXiv: math/0302310},
}

\bib{Rieffel98a}{article}{
      author={{R}ieffel, M.~A.},
       title={Metrics on states from actions of compact groups},
        date={1998},
     journal={Documenta Mathematica},
      volume={3},
       pages={215\ndash 229},
      issn={1431-0635},
      review={\MR{99k:46126}},
        note={math.OA/9807084},
}

\bib{Rieffel99}{article}{
      author={{R}ieffel, M.~A.},
       title={Metrics on state spaces},
        date={1999},
     journal={Documenta Math.},
      volume={4},
       pages={559\ndash 600},
       issn={1431-0635},
       review={\MR{2001g:46154}},
        note={math.OA/9906151},
}

\bib{Rieffel02}{article}{
      author={{R}ieffel, M.~A.},
       title={Group {$C^\ast$}-algebras as compact quantum metric spaces},
        date={2002},
     journal={Documenta Math.},
      volume={7},
       pages={605\ndash 651},
       issn={1431-0635},
       review={\MR{2004k:22009}},
        note={ArXiv: math/0205195},
}

\bib{Rieffel01}{article}{
      author={{R}ieffel, M.~A.},
       title={Matrix algebras converge to the sphere for quantum
  {G}romov--{H}ausdorff distance},
        date={2004},
     journal={Mem. Amer. Math. Soc.},
      volume={168},
      number={796},
       pages={67\ndash 91},       
        note={math.OA/0108005},
}

\bib{Rieffel06}{article}{
      author={{R}ieffel, M.~A.},
       title={Lipschitz extension constants equal projection constants},
        date={2006},
     journal={Contemporary Math.},
      volume={414},
       pages={147\ndash 162},
       review={\MR{2007k:46028}},
        note={ArXiv: math/0508097},
}

\bib{Rieffel09}{article}{
      author={{R}ieffel, M.~A.},
       title={Distances between matrix alegbras that converge to coadjoint
  orbits},
        date={2010},
     journal={Proc. Sympos. Pure Math.},
      volume={81},
       pages={173\ndash 180},
       review={\MR{2011j:46126}},
        note={ArXiv: 0910.1968},
}

\bib{Rieffel10c}{article}{
      author={{R}ieffel, M.~A.},
       title={{L}eibniz seminorms for "matrix algebras converge to the
  sphere"},
        date={2010},
     journal={Clay Math. Proc.},
      volume={11},
       pages={543\ndash 578},
       review={\MR{2011j:46125}},
        note={ArXiv: 0707.3229},
}

\bib{Rieffel10}{article}{
      author={{R}ieffel, M.~A.},
       title={Vector bundles and {G}romov-{H}ausdorff distance},
        date={2010},
     journal={Journal of {K}-theory},
      volume={5},
       pages={39\ndash 103},
       issn={1865-2433},
       review={\MR{2011c:53085}},
        note={ArXiv: math/0608266},
}

\bib{Rieffel11}{article}{
      author={{R}ieffel, M.~A.},
       title={Leibniz seminorms and best approximation from
  {$C^\ast$}-subalgebras},
        date={2011},
     journal={Sci China Math},
      volume={54},
      number={11},
       pages={2259\ndash 2274},
       issn={1674-7283},
       review={\MR{2012k:46088}},
        note={ArXiv: 1008.3773},
}

\bib{Rieffel12}{article}{
      author={{R}ieffel, M.~A.},
       title={Standard deviation is a strongly {L}eibniz seminorm},
        date={2014},
     journal={New York J. Math.},
     volume={20},
       pages={35\ndash 56},
       issn={1076-9803},
       review={\MR{3159395}},
        note={ArXiv: 1208.4072},
}

\bib{Rieffel00}{article}{
      author={{R}ieffel, M.~A.},
       title={{G}romov-{H}ausdorff distance for quantum metric spaces},
        date={March 2004},
     journal={Mem. Amer. Math. Soc.},
      volume={168},
      number={796},
      issn={0065-9266},
      review={\MR{2055928}},
        note={math.OA/0011063},
}

\bib{Wu06a}{article}{
      author={{W}u, {W}.},
       title={Non-commutative metric topology on matrix state spaces},
        date={2006},
     journal={Proc. Amer. Math. Soc.},
      volume={134},
      number={2},
       pages={443\ndash 453},
       issn={0002-9939},
       review={\MR{2006f:46072}},
        note={ArXiv: math.OA/0410587},
}

\bib{Wu06b}{article}{
      author={{W}u, {W}.},
       title={Quantized {G}romov-{H}ausdorff distance},
        date={2006},
     journal={J. Funct. Anal.},
      volume={238},
      number={1},
       pages={58\ndash 98},
issn={0022-1236},
review={\MR{2007h:46088}},
        note={ArXiv: math.OA/0503344},
}

\end{biblist}
\end{bibdiv}
\vfill
\end{document}